\numberwithin{equation}{section}
\newtheorem{thm}{Theorem}[section]
\newtheorem{lem}[thm]{Lemma}
\newtheorem{cor}[thm]{Corollary}
\newtheorem{rem}[thm]{Remark}
\newtheorem{ass}[thm]{Assumption}
\newcommand{\mcc}{\mathcal{C}}
\newcommand{\mcf}{\mathcal{F}}
\newcommand{\mcl}{\mathcal{L}}
\newcommand{\mbbh}{\mathbb{H}}
\newcommand{\mbbr}{\mathbb{R}}
\newcommand{\mbby}{\mathbb{Y}}
\newcommand{\del}{\delta}
\newcommand{\sig}{\sigma}
\newcommand{\ep}{\epsilon}
\newcommand{\D}{\Delta}
\newcommand{\Sig}{\Sigma}
\newcommand{\lam}{\lambda}
\newcommand{\gam}{\gamma}
\newcommand{\Gam}{\Gamma}
\newcommand{\p}{\partial}
\newcommand{\cil}{\xrightarrow{\mcl}} 
\newcommand{\cip}{\xrightarrow{p}} 
\newcommand{\argmin}{\mathop{\rm argmin}} 
\newcommand{\argmax}{\mathop{\rm argmax}}
\newcommand{\diag}{\mathop{\rm diag}} 
\newcommand{\tr}{\mathop{\rm Tr}} 
\def\ds#1{\displaystyle{#1}} 
\def\nn{\nonumber}
\def\cadlag{c\`adl\`ag}
\def\lp{L\'{e}vy process}
\def\sumi{\sum_{i=1}^{N}}
\def\pr{P} \def\E{E}
\def\var{\mathrm{Var}}
\def\cov{\mathrm{Cov}}
\def\rev#1{\textcolor{black}{#1}}
\def\rrev#1{\textcolor{black}{#1}}
\newcommand{\tz}{\theta_{0}}
\newcommand{\tes}{\hat{\theta}_{N}}
\newcommand{\bes}{\hat{\beta}_{N}}
\newcommand{\ves}{\hat{v}_{N}}
\title[Gaussian quasi-likelihood analysis for mixed-effects models]
{Gaussian quasi-likelihood analysis for non-Gaussian linear mixed-effects model with system noise}
\author[T. Imamura]{Takumi Imamura}
    \address{Biostatistics Center, Shionogi \& Co., Ltd., Osaka, and Graduate School of Mathematical Sciences, University of Tokyo, Japan}
\author[H. Masuda]{Hiroki Masuda}
\address{Graduate School of Mathematical Sciences, University of Tokyo, 3-8-1 Komaba Meguro-ku Tokyo 153-8914, Japan}
\email{hmasuda@ms.u-tokyo.ac.jp}
\date{\today}
\keywords{Gaussian quasi-likelihood analysis, integrated Ornstein-Uhlenbeck process, mixed-effects model}
\begin{document}

\maketitle

\begin{abstract}
We consider statistical inference for a class of mixed-effects models with a system noise described by a non-Gaussian integrated Ornstein-Uhlenbeck process. Under the asymptotics where the number of individuals goes to infinity with possibly unbalanced sampling frequency across individuals, we prove some theoretical properties of the Gaussian quasi-likelihood function, followed by the asymptotic normality and the tail-probability estimate of the associated estimator. In addition to the joint inference, we propose and investigate the three-stage inference strategy, revealing that they are first-order equivalent while quantitatively different in the second-order terms.
Numerical experiments are given to illustrate the theoretical results.
\end{abstract}

\section{Introduction}

\subsection{Background and motivation}
This paper aims to develop a statistical inference theory for a class of models used in longitudinal data analysis. Longitudinal data are repeated measurements or observations taken over time for multiple individuals; for example, in HIV research, the CD4 lymphocyte count and the HIV viral load. In these longitudinal data analyzes, our objective is to infer or evaluate changes over time in the mean structure of the response variable, the effects of covariates on the response variable, and the within-individual correlations of the response variable.
\rrev{Theoretically, by fitting a normal distribution through targeting only the mean and covariance structures of the random dynamics, our primary focus is to propose an explicit and easy-to-use estimation method, and to derive its theoretical properties.}

When longitudinal data are measured or collected prospectively, the time points at which the data will be measured are usually set in advance. However, due to reasons such as dropout from the longitudinal study, not all individuals are necessarily measured at all planned time points. In such cases, the number of measurements may vary between individuals and the measurement intervals within and between individuals may also vary. Such is called the ``unbalanced'' data set.
As a traditional approach to handling the unbalanced data set, linear mixed-effects (LME) models \cite{LairdWare1982} are frequently used. As an alternative approach, LME models with a Gaussian integrated Ornstein-Uhlenbeck (OU) process as the system noise are proposed in \cite{TayCumSy94}; see also \cite{HugKenSteTil17}. A special feature of this model is that we can estimate the degree of derivative tracking from longitudinal data \cite{BosTayLaw1998}. We consider that the trajectory of each individual tends to follow a linear path. In that case, the model is said to have strong derivative tracking (i.e., a LME model in which explanatory variables for fixed and random effects include time variables). On the other hand, if the slope of each individual's trajectory tends to change continuously, the model is said to have weak derivative tracking. See \cite{TayCumSy94} for more details on the derivative tracking.

The previous study \cite{ImaMasTaj24} showed the local asymptotic normality and the optimality of a local maximum-likelihood estimator for a class of Gaussian LME models with the integrated OU process as the system noise. Although the classical LME models are usually applied under the Gaussianity of the random effect and the measurement error, there have been some studies about model misspecification of the random effect in the context of (generalized) LME models, e.g. \cite{McCNeu2011} and \cite{McCNeu11}. 
\rev{Furthermore, the previous study \cite{asar2020linear} suggested through simulations and case studies that using models that include non-Gaussian system-noise can lead to reliable inferences about fixed-effect parameters. Based on these previous studies, it will be practically useful to develop a theory of inference that does not require the assumption that the random-effect, the measurement error, and the system-noise are Gaussian. In such cases where Gaussianity is not assumed, it is important to consider what type of likelihood function should be used to make statistical inferences. 
\rrev{
While an ideal likelihood function can be derived from empirical evidence and data characteristics, such a specification is not always feasible. In such cases, it is preferable to employ a simple yet distributionally robust likelihood function that retains theoretical validity even under model misspecification.
}
As a simple likelihood function, we consider an estimation methodology using the Gaussian quasi-likelihood random function (GQLF). 
If the estimator obtained from the GQLF exhibits desirable asymptotic behavior (e.g. consistency and asymptotic normality), it is expected that analysts can make statistical inferences without using a complex likelihood function;
this is exactly the primary objective of this paper.
For the driving L\'{e}vy process, we will only impose some moment conditions without specifying any distributional class, so that the proposed method can apply to a wide range of models.
}

In this paper, we consider a class of LME models with the possibly non-Gaussian integrated L\'{e}vy-driven OU process as the system noise. On the one hand, as in \cite{TayCumSy94}, thanks to the continuous-time framework, this framework allows us to smoothly handle unbalanced longitudinal data sets in a unified manner; this nice feature cannot hold for the discrete-time first-order autoregressive structure. 
On the other hand, by adding the integrated OU process term, the Gaussian quasi-likelihood function becomes nonlinear for parameters associated with the OU process, raising concern about the large computational load of simultaneous optimizations for parameter estimates \cite{ImaMasTaj24}. To mitigate this problem, we propose a three-stage stepwise inference strategy in which the mean and covariance structures are optimized separately and alternately. 
By splitting the target parameters, it is expected that the computational load will be reduced compared to simultaneous optimization.

In our main result, we will show the very strong mode of convergence of the quasi-likelihood-ratio random field, namely, not only the weak convergence (locally asymptotically quadratic property) and uniform tail-probability estimate. To the best of our knowledge, within the class of LME models, there has been no previous study that compared joint likelihood inference with stepwise likelihood inference in terms of computational load and theoretical properties. 

\subsection{Setup and objective}
\label{hm:sec_basic.setup}
Suppose that we are given a longitudinal data set from $i$th individual at given time points $0=t_{i0}<t_{i1}<\dots<t_{i n_i}$, described by
\begin{align}
Y_i(t_{ij}) &= X_i(t_{ij})^\top \beta + Z_i(t_{ij})^\top b_i + W_i(t_{ij}) +\ep_i(t_{ij})
\label{hm:target.model}
\end{align}
for $1\le i\le N$ and $1\le j\le n_i$, where $\top$ denotes the transposition of a matrix and
\begin{equation}
\max_{i\le N}n_i = O(1).
\label{hm:ni-order}
\end{equation}
Here and in what follows, the asymptotics are taken for $N\to\infty$.
We will use the generic convention $\xi_{ij}=\xi_i(t_{ij})$, so that \eqref{hm:target.model} becomes
\begin{equation}
Y_{ij} = X_{ij}^\top \beta + Z_{ij}^\top b_i + W_{ij} +\ep_{ij}.
\label{hm:model_Yij}
\end{equation}
The ingredients are specified as follows.
\begin{itemize}
\item $X_{ij}\in\mbbr^{p_\beta}$ and $Z_{ij}\in\mbbr^{p_b}$ denote non-random explanatory variables for fixed and random effects of the $i$th individual, respectively, such that
\begin{equation}\nn
    \sup_{N}\max_{i\le N} \left(|X_i|+|Z_i|\right) < \infty,
\end{equation}
where, with a slight abuse of notation, $X_i :=(X_{ij})_{j=1}^{n_i} \in \mbbr^{n_i}\otimes\mbbr^{p_\beta}$ and $Z_i:=(Z_{ij})_{j=1}^{n_i} \in \mbbr^{n_i}\otimes\mbbr^{p_b}$, and $|\cdot|$ denotes the Euclidean norm.

\item $\beta\in\mbbr^{p_\beta}$ is the unknown fixed-effect parameter, which is common across the individuals.

\item Let $b_1,b_2,\dots$ be unobserved random-effects that are i.i.d. zero-mean random variables in $\mbbr^{p_b}$ with common nonnegative-definite covariance matrix $\Psi(\gam)$ for some function $\Psi:\,\mbbr^\gam \to \mbbr^{p_b}\otimes\mbbr^{p_b}$.
\begin{itemize}
    \item We do not fully specify the common distribution $\mcl(b_1)$. For a specific form of $\Psi(\gam)$, one may adopt the unstructured setting where all the entries of $\Psi(\gam)$ are fully unknown. However, it may suffer from computational issues caused by the high dimensionality of the parameters.
\end{itemize}

\item The stochastic process $W_i(\cdot)$ represents an unobserved random system-noise process driving the $i$th individual ($i=1,\dots, N$), described as the integrated Ornstein-Uhlenbeck (intOU) process:
\begin{equation}\nn
   W_i(t)=\int_0^t \zeta_i(s)ds, 
\end{equation}
where $\zeta_i(\cdot)$ denotes the L\'{e}vy-driven OU process with the autoregression parameter $\lam>0$ and the scale coefficient $\sig>0$; see \eqref{hm:ou_def} below. 

\item The processes $\ep_1(\cdot),\ep_2(\cdot),\dots$ denote i.i.d. white noise process representing measurement error: for each $i$, the variables $\ep_i(t_{i1}),\dots,\ep_i(t_{i n_i})$ are centered and uncorrelated, and have variance $\sig_\ep^2$.

\item The random variables $\{b_i\}$, $\{W_i(\cdot)\}$, and $\{\epsilon_i\}$ are mutually independent.

\end{itemize}
All the random elements introduced above are defined on an underlying filtered probability space 
endowed with the i.i.d. random sequence 
\begin{equation}\nn
    \{(b_i,\zeta_i(0),(L_i(t))_{t\le T},(\ep_i(t_{ij}))_{j\le n_i})\}_{i\ge 1},
\end{equation}
where $T>0$ is a fixed number for which $\sup_{N\ge 1}\max_{i\le N} \max_{j\le n_i} t_{ij} \le T$ (such a $T$ does exist under \eqref{hm:ni-order}).
As before, we will simply write the response-variable vectors $Y_i :=(Y_{ij})_{j=1}^{n_i} \in \mbbr^{n_i}$, $1\le i\le N$, so that
\begin{equation}
Y_{i} = X_{i}\beta + Z_{i} b_i + W_{i} +\ep_{i} \nn
\end{equation}
in the matrix-product form.

The model for the observation $\{(X_i,Y_i,Z_i)\}_{i\le N}$ is thus indexed by the finite-dimensional parameter
\begin{align}
\theta := (\beta,v)=\left(\beta, \gam, \lam,\sig^2,\sig_\ep^2 \right) \in \Theta 
&= \Theta_\beta \times \Theta_v = \Theta_\beta \times \Theta_\gam 
\times \Theta_{\lam} \times \Theta_{\sig^2} \times \Theta_{\sig_{\epsilon}^2} 
\nn\\
&\subset \mbbr^{p_\beta}\times\mbbr^{p_\gam}\times(0,\infty)\times(0,\infty)\times(0,\infty)
\nonumber
\end{align}
with $v := (\gam, \lam, \sig^2, \sig_{\epsilon}^2)$ denoting the covariance parameter. We assume that the parameter space $\Theta$ is a bounded convex domain in $\mbbr^p$ with $p := p_\beta + p_v$, where $p_v := p_\gam + 3$ denotes the dimension of $v$.
Throughout, we fix a point $\tz=(\beta_0,v_0)\in\Theta$ as a true value of $\theta$, assumed to exist.
It should be noted that the parameter may not completely characterize the distribution of the model, for we do not fully specify the distributions of $b_i$, $\zeta_i$, and $\ep_i$; in this sense, the model is semiparametric. We will denote by $\pr_\theta$, $\E_\theta$, $\var_\theta$, and $\cov_\theta$ the corresponding probability, expectation, variance, and covariance, respectively.
The subscript ``$\tz$'' will be omitted such as $\pr=\pr_{\tz}$ and $\E=\E_{\tz}$.

For convenience, we briefly mention some preliminary facts about the intOU process $W_i(\cdot)$.
Let $\zeta_1(\cdot),\zeta_2(\cdot),\dots$ be i.i.d. OU processes given by the stochastic differential equation
\begin{equation}
d\zeta_i(t)=-\lam \zeta_i(t) dt+\sig dL_i(t)
\label{hm:ou_def}
\end{equation}
for $i=1,\dots,N$, where $L_1,L_2,\dots$ are i.i.d. {\cadlag} {\lp es} such that $E_\theta[L_i(t)]=0$ and $\var_\theta[L_i(t)]=t$ for $t\in[0,T]$. The process $\zeta_i$ has a unique invariant distribution. We assume that $\zeta_i$ are strictly stationary, that is, $\zeta_i(0)$ obeys the invariant distribution; in this case, we can write
\begin{equation*}
\zeta_i(t) =\int_{-\infty}^t e^{-\lam(t-s)}\sig dL_i(s)    
\end{equation*}
for a two-sided version $(L_i(t))_{t\in\mbbr}$ of $L_i$.
We know that each $\zeta_i$ is exponentially ergodic. We refer to \cite{Mas04}, \cite{Mas07}, and the references therein for related details.
Now, we define $W_i(t)$ as unobserved random system-noise processes driving the $i$th individual ($i=1,\dots,N$), described as the intOU process: 
\begin{align}
W_i(t) & :=\int_0^t\zeta_i(s) ds \nn\\
&= \int_0^t\left(e^{-\lam s}\zeta_i(0) + \sig \int_0^s e^{-\lam(s-v)} dL_i(v)\right)ds \nn\\
&= \frac{\zeta_i(0)}{\lam}(1-e^{-\lam t}) + \frac{\sig}{\lam} \int_0^t (1-e^{-\lam(t-v)}) dL_i(v).
\nonumber
\end{align}
We denote by 
$H_i(\lam, \sig^2) = (H_{i;jk}(\lam, \sig^2))_{j,k=1}^{n_i} \in \mbbr^{n_i}\otimes\mbbr^{n_i}$ 
the covariance matrix of $W_i :=(W_{ij})_{j=1}^{n_i}$:
\begin{align}
H_{i;jk}(\lam,\sig^2) 
&:= \cov_\theta\left[W_{ij}, W_{ik}\right]\nn \\
&=\frac{\sig^2}{2\lam^3}\left(2\lam \min(t_{ij},t_{ik})+e^{-\lam t_{ij}}+e^{-\lam t_{ik}}-1-e^{-\lam |t_{ij}-t_{ik}|}\right).
\label{hm:H_form}
\end{align}

Under the aforementioned setup, our objective in this paper is to investigate the asymptotic behavior of the marginal GQLF, based on which we can prove the asymptotic normality, the second-order asymptotic expansion, and the tail-probability estimate of the associated estimator. The GQLF provides us with an explicit inference strategy only by using the second-order (covariance) structure without full distributional specification of the underlying model. We will formulate the two kinds of GQLFs, the joint and the stepwise ones.
Although our primary interest is the intOU mixed-effects model \eqref{hm:model_Yij}, in the main sections \ref{hm:sec_joint} and \ref{hm:sec_stepwise} we will work with the following notation for the mean vector and the covariance matrix:
\begin{align}
\mu_i(\beta) &:= \E_\theta[Y_{i}] = X_i \beta, \nn\\
\Sig_i(v) &:= \cov_\theta[Y_{i}] = Z_i \Psi(\gam) Z_i^\top + H_i(\lam, \sig^2) + \sig_{\epsilon}^2 I_{n_i}.
\label{hm:Sig_def}
\end{align}
This will not only make the arguments more convenient and transparent but also make extensions to various non-linear settings straightforward.

\subsection{Outline}

We first study the joint GQLF in Section \ref{hm:sec_joint} and then the stepwise GQLF in Section \ref{hm:sec_stepwise}. In both cases, we obtain the asymptotic normality, the second-order stochastic expansion of the estimator, and the tail-probability estimate.
Section \ref{hm:sec_linear.case} provides some remarks about the original setup \eqref{hm:model_Yij}. Section \ref{sec:simulations} presents some illustrative simulation results and data analysis.

\subsection{Comments on model selection}

Our results include the asymptotic normality at rate $\sqrt{N}$ of the form 
$\sqrt{N}(\tes-\tz) \cil N_p\left(0,\, \Gam_0^{-1} S_0 \Gam_0^{-1}\right)$ 
and the tail-probability estimate $\sup_N \pr[|\sqrt{N}(\tes-\tz)|>r]\lesssim r^{-L}$ for any $L>0$.
With these results, it is routine to derive the fundamental model selection criteria associated with the joint GQLF $\mbbh_N(\theta)$:
the classical marginal Akaike information criteria (AIC)
\begin{equation}\nn
    -2\mbbh_N(\tes) + 2\tr(\hat{\Gam}_N^{\prime -1} \hat{S}'_N),
\end{equation}
where $\hat{\Gam}'_N$ and $\hat{S}'_N$ are suitable consistent estimators of $\Gam_0$ and $S_0$, respectively, 
and Schwarz's Bayesian information criterion (BIC)
\begin{equation}
\label{ti:BIC}
 -2\mbbh_N(\tes) + p \log N.
\end{equation}
Concerning this point, we refer to \cite{EguMas18} and \cite{EguMas24} for detailed studies of AIC- and BIC-type statistics based on the GQLF.

Yet another well-known information criterion is the conditional AIC (cAIC) introduced in \cite{VaiBla05}; see also \cite{Kub11} and \cite{MulSceWel13} for some details of a conditional AIC based on the genuine likelihood. Formulating and deriving the cAIC will require different considerations, and we hope to report it elsewhere.

\subsection{Basic notation}
For two real positive sequences $(a_N)$ and $(b_N)$, we write $a_N \lesssim b_N$ if $\limsup_{N}(a_N/b_N) < \infty$.
We use the multilinear-form notation 
\begin{equation}\nn
    M[u_{i_1}, \dots, u_{i_m}] := \sum_{i_1 \dots i_m} M_{i_1 \dots i_m} u_{i_1}\dots u_{i_m}
\end{equation}
for a tensor $M = \{M_{i_1 \dots i_m}\}$; it may take values in a multilinear form.
For a square matrix $A$, we denote its 
Frobenius norm by $|A|$, minimum eigenvalue by $\lambda_{\min}(A)$, and trace by $\tr(A)$.
The $d$-dimensional identity matrix is denoted by $I_d$.
The $k$th partial differentiation operator with respect to variables $a$ is denoted by $\p^k_{a}$, with $\p_a$ for $k=1$.
We use the symbol $\phi_{n_i}(\cdot;\mu,\Sig)$ for the $n_i$-dimensional Gaussian $N_{n_i}(\mu,\Sig)$-density.

\rev{
For convenience of reference, in Table \ref{List_notation} we provide a list of main notations used in this paper.
}

\begin{footnotesize}
\begin{table}[h]
\centering
\caption{\rev{Brief notation table}}
\label{List_notation}
\begin{tabular}{p{1.7cm}lp{4.5cm}}
\toprule
{} & \rev{Notation} & \rev{Description} \\
\hline 
\rev{Observed data for $i$th individual} & \rev{$Y_i := (Y_{ij})_{j=1}^{n_i} \in \mbbr^{n_i}$} &\rev{all response data} \\
{} & \rev{$X_i := (X_{ij})_{j=1}^{n_i} \in \mbbr^{n_i}\otimes\mbbr^{p_\beta}$} & \rev{all explanatory data for fixed-effect} \\
{} & \rev{$Z_i := (Z_{ij})_{j=1}^{n_i} \in \mbbr^{n_i}\otimes\mbbr^{p_b}$} & \rev{all explanatory data for random-effects} \\
\hline

\rev{Random variable for $i$th individual} & \rev{$b_i \in \mbbr^{p_b}$} & \rev{random-effect parameter} \\
{} & \rev{$W_i(t) \in \mbbr$} & \rev{system-noise process} \\
{} & \rev{$\ep_i := (\ep_{ij})_{j=1}^{n_i} \in \mbbr^{n_i}$} & \rev{all measurement errors} \\
\hline

\rev{Parameter} & \rev{$\beta \in \mbbr^{p_\beta}$} & \rev{fixed-effect parameter} \\
{} & \rev{$\gam \in \mbbr^{p_\gam}$} & \rev{parameter that composes the covariance matrix of random-effect parameter} \\
{} & \rev{$\lam \in (0,\infty)$} & \rev{autoregression parameter of system-noise process} \\
{} & \rev{$\sig \in (0,\infty)$} & \rev{scale parameter of system-noise process} \\
{} & \rev{$\sig_{\ep} \in (0,\infty)$} & \rev{variance parameter of measurement error} \\
{} & \rev{$v := (\gam,\lam,\sig^2,\sig_{\ep}^2)$} & \rev{$p_v$-dimensional covariance parameter for response variable} \\
{} & \rev{$\theta := (\beta, v)$} & \rev{$p$-dimensional vector that bundles all parameters} \\
{} & \rev{$\tz = (\beta_0, v_0)$} & \rev{true value of $\theta$} \\
{} & \rev{$\tes = (\bes, \ves)$} & \rev{joint GQMLE} \\
{} & \rev{$\widetilde{\theta}_N = (\widetilde{\beta}_N, \widetilde{v}_N)$} & \rev{stepwise GQMLE} \\
{} & \rev{$\widetilde{\beta}_{N,1} \in \mbbr^{p_\beta}$} & \rev{stepwise GQMLE in Stage 1} \\
\hline

\rev{Random function} &\rev{$\mbbh_N(\theta) := \sumi \log\phi_{n_i}\left(Y_i;\,\mu_i(\beta),\Sig_i(v)\right)$} & \rev{joint GQLF} \\
{} & \rev{$\mbby_N(\theta) := \frac{1}{N}(\mbbh_N(\theta)-\mbbh_N(\theta_0))$} & \rev{joint quasi-Kullback-Leibler divergence} \\
{} & \rev{$\D_N(\theta) := \frac{1}{\sqrt{N}}\p_\theta \mbbh_N(\theta)$} & \rev{\textbf{joint quasi-score function}} \\
{} & \rev{$\Gam_N(\theta) := -\frac{1}{N}\p_\theta^2\mbbh_N(\theta)$} & \rev{\textbf{joint quasi-observed information matrix}} \\
{} & \rev{$\mbbh_{N,(1)}(\beta) := \sumi \log\phi_{n_i}\left(Y_i;\,\mu_i(\beta),I_{n_i}\right)$} & \rev{stepwise GQLF in Stage 1} \\
{} & \rev{$\mbbh_{N,(2)}(v) := \mbbh_N(\widetilde{\beta}_{N,1},v)$} & \rev{stepwise GQLF in Stage 2} \\
{} & \rev{$\mbbh_{N,(3)}(\beta) := \mbbh_N(\beta,\widetilde{v}_N)$} & \rev{stepwise GQLF in Stage 3} \\
{} & \rev{$\D_{N,(k)}$} & \rev{\textbf{stepwise quasi-score function} in Stage $k$ for $\beta_0$ in Stage 1 and 3 and $v_0$ in Stage 2} \\
{} & \rev{$\Gam_{N,(k)}$} & \rev{\textbf{stepwise quasi-observed information matrix} in Stage $k$ for $\beta_0$ in Stage 1 and 3 and $v_0$ in Stage 2} \\
\hline

\rev{Other function} & \rev{$\mu_i(\beta) \in \mbbr^{n_i}$} & \rev{mean vector of the $i$th individual} \\
{} & \rev{$\Sig_i(v) \in \mbbr^{n_i}\otimes \mbbr^{n_i}$} & \rev{covariance matrix of the $i$th individual} \\
{} & \rev{$\Psi(\gam) \in \mbbr^{p_b}\otimes \mbbr^{p_b}$} & \rev{covariance matrix of random-effect parameter} \\
{} & \rev{$H_i(\lam,\sig) \in \mbbr^{n_i} \otimes\mbbr^{n_i}$} & \rev{covariance matrix of system noise} \\

\hline
\end{tabular}
\end{table}
\end{footnotesize}

\section{Joint Gaussian quasi-likelihood analysis}
\label{hm:sec_joint}


The joint GQLF is defined by
\begin{equation}
\mbbh_N(\theta)=\mbbh_N(\beta, v) := \sumi \log\phi_{n_i}\left(Y_i;\,\mu_i(\beta),\Sig_i(v)\right).
\nonumber
\end{equation}
Although the data generating distribution $\mcl(Y_{i})$ may not be Gaussian, we set our statistical model Gaussian with possibly different dimensions across the indices $i=1,\dots, N$; of course, $\mbbh_N(\theta)$ is the exact log-likelihood if $Y_1,\dots, Y_N$ are truly Gaussian.

In addition to the standing assumptions described in Section \ref{hm:sec_basic.setup}, we impose further regularity conditions. Denote by $\overline{\Theta}$ the closure of $\Theta$.

\begin{ass}\label{hm:A_1}
~
\begin{enumerate}
    \item The functions $\theta \mapsto(\mu_i(\beta),\Sig_i(v))$ $(i\ge 1)$ are of class $\mcc^{4}(\Theta)$ and all the derivatives with itself are continuous in $\theta\in\overline{\Theta}$.
    \item $\ds{\inf_N \min_{1 \leq i \leq N} \inf_{v \in \Theta_v} \lambda_{\min}(\Sigma_i(v)) > 0}$.
    \item 
    $\ds{\sup_{N\ge 1} \max_{1 \leq i \leq N} \max_{1\le k\le 4} \left(\sup_{\beta \in \Theta_{\beta}} |\p_{\beta}^k \mu_i(\beta)| \vee \sup_{v \in \Theta_v} |\p_{v}^k \Sigma_i(v)| \right) < \infty}$.
\end{enumerate}
\end{ass}

\begin{ass}\label{hm:A_2}
$\E[|L_1(t)|^q]+\E[|\ep_1(t)|^q]+\E[|b_1|^q]<\infty$ for every $q>0$ and $t\le T$.
\end{ass}

The joint \textit{Gaussian quasi-maximum likelihood estimator (GQMLE)} is defined to be any element
\begin{equation}
    \tes=(\bes,\ves) \in \argmax_{\theta \in \overline{\Theta}} \mathbb{H}_{N}(\theta).
    \nn
\end{equation} 
Under Assumption \ref{hm:A_2}, at least one such $\tes$ does exist ($\pr$-)a.s.

\subsection{Uniform convergence of quasi-Kullback-Leibler divergence}

To deduce the consistency of the joint GQMLE, we will prove the asymptotic behavior of the normalized \textit{quasi-Kullback-Leibler divergence} associated with $\mbbh_N$, defined by
\begin{equation}
\mbby_{N}(\theta) :=\frac{1}{N}\left(\mbbh_{N}(\theta) - \mbbh_{N}(\tz)\right).
\nn
\end{equation}
Let us write $\mbby_{N}(\theta)=N^{-1}\sumi \xi_i(\theta)$, where
\begin{align}
\xi_i(\theta) &:= \frac{1}{2} \Big(\log|\Sig_i(v_0)| - \log|\Sig_i(v)| - \Sig_i(v)^{-1} [(\mu_i(\beta_0) - \mu_i(\beta))^{\otimes2}] \nn \\
&{} \qquad + (\Sig_i(v_0)^{-1} - \Sig_i(v)^{-1})[(Y_i - \mu_i(\beta_0))^{\otimes2}] 
\nn\\
&{}\qquad - 2\Sig_i(v)^{-1}[\mu_i(\beta_0) - \mu_i(\beta), Y_i - \mu_i(\beta_0)] \Big).
\nn
\end{align}
For each $\theta$,
\begin{align}
    \E[\xi_i(\theta)] &= \frac{1}{2} \Big(
    \log|\Sig_i(v_0)| - \log|\Sig_i(v)| 
    - \tr\left(\Sig_i(v)^{-1} \Sig_i(v_0) - I_{n_i}\right)
    \nn\\
    &\qquad - \Sig_i(v)^{-1} [(\mu_i(\beta) - \mu_i(\beta_0))^{\otimes2}]\Big). \nn
\end{align}
Let
\begin{align}
F_{N,1}(v)&:=\frac{1}{N} \sumi \{\log|\Sigma_i(v_0)| -\log|\Sigma_i(v)| - \tr\left(\Sigma_i(v)^{-1} \Sigma_i(v_0) - I_{n_i}\right)\},
\nn\\
F_{N,2}(\theta)&:= \frac{1}{N}\sumi \Sigma_i(v)^{-1} [(\mu_i(\beta) - \mu_i(\beta_0))^{\otimes2}].
\nn
\end{align}
Note that we are not making any structural assumptions on the sequences $(X_{ij})_{j=1}^{n_i}$ and $(Z_{ij})_{j=1}^{n_i}$ for all $i=1, \dots, N$. To ensure the convergence of $\mbby_N(\cdot)$ to a specific limit in probability, we impose the following.

\begin{ass}\label{hm:A_3}
There exist non-random $\mcc^{2}(\Theta)$-functions $F_{0,1}(v)=F_{0,1}(v;v_0)$ and $F_{0,2}(\theta)=F_{0,2}(\theta;\beta_0)$ such that 
\begin{align}
\sup_N \sup_\theta \left( \sqrt{N} |F_{N,1}(v) - F_{0,1}(v)| + \sqrt{N} |F_{N,2}(\theta) - F_{0,2}(\theta)| \right) <\infty.
\nn
\end{align}
and that $F_{0,1}(v)$ and $F_{0,2}(\theta)$ and their partial derivatives of orders $\le 2$ are continuous in $\overline{\Theta}$.
\end{ass}

\begin{rem}
\label{hm:rem_LLN-with-rate}
    In our setting, the explicit forms of $F_{0,1}(v)$ and $F_{0,2}(\theta)$ are not available in general because of the possible unbalanced nature of the longitudinal data under consideration; unfortunately, it is the case even when we assume that $(X_i, Z_i)$ is the sequence of i.i.d. random processes. Concerned with the identification of the limits, we have the same situations in Assumptions \ref{hm:A-S.lim} and \ref{hm:A-Gam} below; still, the situation could be simplified to some extent when $\mu_i(\beta)=X_i \beta$ (see Section \ref{hm:sec_linear.case}).
    \rrev{Importantly, as we will demonstrate in Corollary \ref{hm:cor_jASN}, the construction of an asymptotic confidence set based on our estimator is straightforward.}
\end{rem}

Assumption \ref{hm:A_3} implies that
\begin{align}\label{hm:lc-1}
\sup_N \sup_{\theta \in \Theta} \left|\sqrt{N} \left(\frac{1}{N} \sumi \E[\xi_i(\theta)] - \mbby_0(\theta)\right)\right| < \infty,
\end{align}
where
\begin{equation}
    \mbby_0(\theta) := \frac{1}{2} \left( F_{0,1}(v) - F_{0,2}(\theta)\right)
\end{equation}
is a non-random $\mcc^{2}(\Theta)$-function.
We see that $F_{0,1}(v)\le 0$ by invoking the property of the Kullback-Leibler divergence between two multivariate normal distributions. Since $F_{0,2}(\theta)\ge 0$, it holds that $\mbby_0(\theta)\le 0$. 
We follow the custom of \cite{Yos11} to state the identifiability condition:

\begin{ass}\label{hm:A_4}
There exists a constant $\chi_0>0$ such that 
$F_{0,1}(v)\le -\chi_0|v-v_0|^2$ and $F_{0,2}(\theta) \ge \chi_0|\theta-\tz|^2$ for every $\theta\in\Theta$.
\end{ass}

The following two conditions are sufficient for Assumption \ref{hm:A_4}:
\begin{itemize}
    \item $\{\tz\} = \argmax_{\theta \in \overline{\Theta}} \mbby_0(\theta)$, namely, $\mbby_0(\theta)=0$ if and only if $\theta=\tz$;
    \item $-\p_{\theta}^2 \mbby_0(\tz)$ is positive definite.
\end{itemize}
The sufficiency can be seen through the Taylor expansion and the compactness of $\overline{\Theta}$: 
first, take $\del>0$ small enough to ensure that
\begin{equation}\nn
    \sup_{\theta:\,|\theta-\tz|\le \del}
    |\theta-\tz|^{-2}\mbby_0(\theta) \lesssim - \inf_{\widetilde{\theta}:\,|\widetilde{\theta}-\tz|\le \del}\lam_{\min}\big(-\p_\theta^2 \mbby_0(\widetilde{\theta})\big) \le -\chi_{0,1}
\end{equation}
for some $\chi_{0,1}>0$;
second, with the so-chosen $\del>0$, 
the compactness of $\overline{\Theta}$ implies that
\begin{equation}\nn
    \sup_{\theta:\,|\theta-\tz|> \del}|\theta-\tz|^{-2}\mbby_0(\theta) \le -\chi_{0,2}
\end{equation}
for some $\chi_{0,2}>0$.
Hence Assumption \ref{hm:A_4} is verified with $\chi_0=\min\{\chi_{0,1},\chi_{0,2}\}$.

Under Assumption \ref{hm:A_4}, the consistency $\tes\cip\tz$ follows from the uniform convergences in probability $\sup_\theta|\mbby_N(\theta) - \mbby_0(\theta)|\cip 0$.
We will derive it in the following stronger form:
\begin{align}\label{hm:lc-2}
\forall K > 0,\quad
\sup_N \E\left[\sup_{\theta} \left(\sqrt{N} |\mbby_N(\theta) - \mbby_0(\theta)|\right)^K\right] < \infty.
\end{align}
Observe that
\begin{align}
\mbby_N(\theta) - \mbby_0(\theta) = \frac{1}{N}\sumi \left(\xi_i(\theta) - \E[\xi_i(\theta)]\right) + \frac{1}{N} \sumi \E[\xi_i(\theta)] - \mbby_0(\theta).\nn
\end{align}
By \eqref{hm:lc-1}, for \eqref{hm:lc-2} it remains to look at the first term on the right-hand side.
We will make use of the following basic uniform moment estimates. 
Recall that $p$ denotes the dimension of $\theta$.

\begin{lem}
\label{hm:lem_moment.estimate}
Let $\Theta\subset\mbbr^p$ be a bounded convex domain, $q>p\vee 2$, and let $\chi_{Ni}(\theta): \Theta\to\mbbr$, $i\le N$, $N\ge 1$, be random functions. Then, we have
\begin{align}
    \E\left[\sup_\theta \left|\sumi \chi_{Ni}(\theta)\right|^q\right]
\lesssim 
\sup_\theta \E\left[\left|\sumi \chi_{Ni}(\theta)\right|^q\right]
+
\sup_\theta \E\left[\left|\sumi \p_\theta \chi_{Ni}(\theta)\right|^q\right]
    \nn
\end{align}
If in particular $(\p_\theta^k \chi_{Ni}(\theta))_{i=1}^N$ for $k\in\{0,1\}$ and $\theta\in\Theta$ forms a martingale difference array with respect to some filtration $(\mcf_{Ni})_{i\le N}$, then
\begin{align}
\E\left[\sup_\theta \left|\frac{1}{\sqrt{N}}\sumi \chi_i(\theta)\right|^q\right]
\lesssim 
\sup_\theta \frac1N \sumi \E\left[|\chi_i(\theta)|^q\right]
+
\sup_\theta \frac1N \sumi \E\left[|\p_\theta\chi_i(\theta)|^q\right].
\nn
\end{align}
\end{lem}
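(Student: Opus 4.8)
The plan is to prove the two displays separately, with the second being essentially a specialization of the first once one feeds in the moment bounds that a martingale difference array provides. For the first inequality, I would invoke a Sobolev-type embedding on the bounded convex domain $\Theta \subset \mbbr^p$: if $g:\Theta\to\mbbr$ lies in $W^{1,q}(\Theta)$ with $q>p$, then $g$ has a continuous version and
\begin{equation}\nn
\sup_\theta |g(\theta)|^q \lesssim \int_\Theta |g(\theta)|^q\,d\theta + \int_\Theta |\p_\theta g(\theta)|^q\,d\theta,
\end{equation}
where the implied constant depends only on $p$, $q$, and the geometry of $\Theta$ (diameter, convexity radius). Apply this with $g(\theta) = \sumi \chi_{Ni}(\theta)$, take expectations on both sides, and interchange $\E$ with the Lebesgue integral over $\Theta$ via Tonelli; the right-hand side then becomes $\int_\Theta \E[|\sumi\chi_{Ni}(\theta)|^q]\,d\theta + \int_\Theta \E[|\sumi\p_\theta\chi_{Ni}(\theta)|^q]\,d\theta$, each of which is bounded by $|\Theta|$ times the corresponding $\sup_\theta$. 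This yields the first claim. One should remark that the constant is uniform in $N$ because $\Theta$ is fixed; this is the point that makes the estimate useful for the $\sup_N$ statements invoked earlier (e.g.\ \eqref{hm:lc-2}).

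For the second inequality, write $\chi_i=\chi_i(\theta)$ and apply the first part with $\chi_{Ni}(\theta):=N^{-1/2}\chi_i(\theta)$, giving
\begin{equation}\nn
\E\left[\sup_\theta\left|\frac{1}{\sqrt N}\sumi\chi_i(\theta)\right|^q\right]
\lesssim \sup_\theta\E\left[\left|\frac{1}{\sqrt N}\sumi\chi_i(\theta)\right|^q\right]
+\sup_\theta\E\left[\left|\frac{1}{\sqrt N}\sumi\p_\theta\chi_i(\theta)\right|^q\right].
\end{equation}
It then remains to bound each term on the right. Here the martingale-difference hypothesis enters: for fixed $\theta$, $(\chi_i(\theta))_{i\le N}$ is a martingale difference array with respect to $(\mcf_{Ni})$, so by the Burkholder--Davis--Gundy inequality (or the Marcinkiewicz--Zygmund / Rosenthal inequality) followed by the $L^{q/2}$-triangle inequality applied to the quadratic variation $\sumi \chi_i(\theta)^2$,
\begin{equation}\nn
\E\left[\left|\sumi\chi_i(\theta)\right|^q\right]
\lesssim \E\left[\left(\sumi\chi_i(\theta)^2\right)^{q/2}\right]
\lesssim N^{q/2-1}\sumi\E[|\chi_i(\theta)|^q],
\end{equation}
and dividing by $N^{q/2}$ gives the bound $N^{-1}\sumi\E[|\chi_i(\theta)|^q]$. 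The identical argument applied to $(\p_\theta\chi_i(\theta))_{i\le N}$, which is a martingale difference array by hypothesis for $k=1$, handles the second term. Taking $\sup_\theta$ through completes the proof.

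The main obstacle — or rather the main point requiring care — is the Sobolev embedding step and, in particular, the uniformity of its constant. One needs that $\Theta$ is an extension domain (guaranteed by convexity and boundedness, which give a Lipschitz boundary) so that the Morrey inequality $W^{1,q}(\Theta)\hookrightarrow C^{0,1-p/q}(\overline\Theta)$ holds with a constant depending only on $\Theta$; this is where the hypotheses $q>p$ and ``bounded convex domain'' are used. A secondary subtlety is justifying that $g=\sumi\chi_{Ni}$ actually lies in $W^{1,q}(\Theta)$ almost surely: since the $\chi_{Ni}$ are only assumed to be random functions, one should tacitly read the hypothesis as requiring them (and $\p_\theta\chi_{Ni}$) to be, say, continuous on $\Theta$ with the displayed right-hand sides finite, for otherwise both sides may be infinite and the inequality is vacuous. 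Given that reading, the Fubini/Tonelli interchange and the BDG step are routine.
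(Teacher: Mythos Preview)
Your proposal is correct and follows essentially the same route as the paper: the paper's proof also invokes the Sobolev inequality (citing Adams--Fournier) for the first display and then Burkholder's inequality for the second. Your write-up is in fact more precise than the paper's sketch, since you correctly identify the embedding as the Morrey inequality $W^{1,q}(\Theta)\hookrightarrow C(\overline\Theta)$ for $q>p$ on a bounded convex (hence Lipschitz, hence extension) domain, whereas the paper states it loosely in an $L^1$ form; your handling of the Burkholder step via the power-mean/Jensen bound on the square function is exactly what the paper does elsewhere in the text.
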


\begin{proof}
The first inequality is due to the Sobolev inequality \cite{AdaFou03} which says that $\sup_\theta |f(\theta)| \lesssim \int_{\Theta}(|f(\theta)|+|\p_\theta f(\theta)|)d\theta$. Then, we can apply the Burkholder inequality to obtain the second one.
\end{proof}

Returning to our model setup, by Lemma \ref{hm:lem_moment.estimate} we have
\begin{align}
& \hspace{-2cm}
\E\left[\sup_{\theta \in \Theta} \left|\frac{1}{\sqrt{N}} \sumi \left(\xi_i(\theta) - \E[\xi_i(\theta)]\right)\right|^K\right] \nn\\
&\lesssim \sup_{\theta \in \Theta} \E\left[\left|\frac{1}{\sqrt{N}}\sumi \left(\xi_i(\theta) - \E[\xi_i(\theta)]\right)\right|^K\right] \nn \\
&\qquad + \sup_{\theta \in \Theta} \E\left[\left|\frac{1}{\sqrt{N}} \sumi \left(\p_{\theta} \xi_i(\theta) - \E[\p_{\theta} \xi_i(\theta)]\right)\right|^K\right].
\nn
\end{align}
From Burkholder's inequality and Jensen's inequality, it follows that
\begin{align}
\E\left[\left|\frac{1}{\sqrt{N}} \sumi \left(\xi_i(\theta) - \E[\xi_i(\theta)]\right)\right|^K\right] &\lesssim \E\left[\left(\frac1N \sumi \left|\xi_i(\theta) - \E[\xi_i(\theta)]\right|^2\right)^{K/2}\right] \nn \\
&\leq \frac{1}{N} \sumi \E\left[\left|\xi_i(\theta) - \E[\xi_i(\theta)]\right|^K\right]. \nn
\end{align}
For every $K>0$, we have
\begin{equation}\label{hm:Y-bm}
    \sup_{N\ge 1}\max_{1 \leq i \leq N}\E[|Y_i|^K] < \infty
\end{equation}
hence also $\sup_{N\ge 1}\max_{1 \leq i \leq N}\sup_\theta \E[|\xi_i(\theta)|^K] < \infty$.
Therefore,
\begin{align}
\sup_N \sup_{\theta \in \Theta} \E\left[\left|\frac{1}{\sqrt{N}} \sumi \left(\xi_i(\theta) - \E[\xi_i(\theta)]\right)\right|^K\right] < \infty. \nn
\end{align}
Similarly, we obtain
\begin{align}
\sup_N \sup_{\theta \in \Theta} \E\left[\left|\frac{1}{\sqrt{N}} \sumi \left(\p_{\theta}\xi_i(\theta) - \E[\p_{\theta}\xi_i(\theta)]\right)\right|^K\right] < \infty.
\nn
\end{align}
From these we obtain \eqref{hm:lc-2}, and hence the consistency $\tes\cip\tz$ holds.

\subsection{Quasi-score function}
Define the quasi-score function by
\begin{align}\nn
\D_N(\theta) 
= \frac{1}{\sqrt{N}}\p_{\theta}\mbbh_N(\theta).
\end{align}
We have $\D_N(\theta) = (\D_{N, \beta}(\theta),\,\D_{N, v}(\theta)) \in \mbbr^{p_\beta}\times\mbbr^{p_v}$ with
\begin{align}
\D_{N, \beta}(\theta)
&:= \frac{1}{\sqrt{N}} \sumi \Sig_i(v)^{-1}[\p_{\beta} \mu_i(\beta), Y_i - \mu_i(\beta)], \label{hm:def_D_beta}\\
\D_{N, v}(\theta)
&:= \frac{1}{\sqrt{N}} \sumi 
\bigg(\frac{1}{2} (\Sig_i(v)^{-1} (\p_{v_j} \Sig_i(v)) \Sig_i(v)^{-1})[(Y_i - \mu_i(\beta))^{\otimes2}] 
\nn\\
&{}\qquad - \frac{1}{2} \tr(\Sig_i(v)^{-1} (\p_{v_j}\Sig_i(v)))\bigg)_{j=1}^{p_v}.
\label{hm:def_D_v}
\end{align}
From now on, we will often omit ``$(\tz)$'', ``$(\beta_0)$'', and ``$(v_0)$'' from the notation, such as $\D_N=\D_N(\tz)$.
Obviously, $\E[\D_{N, \beta}]=\E[\D_{N, v}]=0$.
Let
\begin{equation}
    A_{ij}
    := \Sig_i^{-1} (\p_{v_j}\Sig_i) \Sig_i^{-1}.
\end{equation}
Then, the covariance matrix
\begin{align} 
S_N := \cov[\D_N] &= 
\begin{pmatrix}
\cov[\D_{N, \beta}] & \cov[\D_{N, \beta},\D_{N, v}] \\
\cov[\D_{N, \beta},\D_{N, v}]^\top & \cov[\D_{N, v}]
\end{pmatrix} \nn \\
&=:
\begin{pmatrix}
S_{N,11} & S_{N,12} \\
S_{N,12}^\top & S_{N,22}
\end{pmatrix} \label{ti:CovScore}
\end{align}
is given by
\begin{align}
S_{N,11} &= \frac{1}{N} \sumi \Sig_i^{-1} [(\p_{\beta} \mu_i)^{\otimes2}]
\in \mbbr^{p_\beta}\otimes\mbbr^{p_\beta}, \nn \\
S_{N,12} &= \frac{1}{2N} \sumi \left(\Sig_i^{-1} [\p_{\beta} \mu_i, \E[(Y_i - \mu_i )^{\otimes2}A_{ij} (Y_i - \mu_i)]]\right)_{j=1}^{p_v}
\in\mbbr^{p_\beta} \otimes \mbbr^{p_v}, \nn \\
S_{N,22} &= \frac{1}{4N} \sumi \bigg(\E[\tr(A_{ij}(Y_i - \mu_i^{\otimes2})\cdot \tr(A_{ik}(Y_i - \mu_i)^{\otimes2})] \nn \\
    & {}\qquad - \tr\left(\Sig_i^{-1} \p_{v_j}\Sig_i\right)\cdot \tr\left(\Sig_i^{-1} \p_{v_k}\Sig_i\right)\bigg)_{j, k=1}^{p_v} \in \mbbr^{p_v}\otimes\mbbr^{p_v}.
\nn
\end{align}
To identify the asymptotic covariance of $\D_N$, we need the convergence of \eqref{ti:CovScore}.

\begin{ass}\label{hm:A-S.lim}
There exists a positive definite matrix 
\begin{equation}\nn
S_0 =
\begin{pmatrix}
S_{0,11} & S_{0,12} \\
S_{0,12}^\top & S_{0,22}
\end{pmatrix}
\in \mbbr^{p}\otimes\mbbr^{p}    
\end{equation}
such that $(S_{N,11},S_{N,12},S_{N,22}) \rightarrow (S_{0,11},S_{0,12},S_{0,22})$, hence $S_N \rightarrow S_0$, as $N \rightarrow \infty$.
\end{ass}

Let $\D_N=:\sumi N^{-1/2}\psi_{i}$. We have $\E[\psi_{i}]=0$ as was mentioned, hence $\sumi\cov[N^{-1/2}\psi_{i}]=\sumi\E[(N^{-1/2}\psi_{i})^{\otimes 2}] = S_N \to S_0$.
Trivially, for each $\del>0$, 
\begin{equation}\nn
    \sumi\E[|N^{-1/2}\psi_{i}|^{2+\del}] =O(N^{-\del/2}) \to 0,
\end{equation}
since $\max_{i\le N}\E[|\psi_{i}|^{2+\del}]=O(1)$, so that the Lyapunov condition holds.
Accordingly, the Lindeberg-Lyapunov central limit theorem concludes that
\begin{equation}\label{hm:D-clt}
    \D_N \cil N_p(0,S_0).
\end{equation}
Further, by Burkholder's inequality and Jensen's inequality,
\begin{align}
\E\left[|\D_N|^{K}\right] = \E\left[\left|\sumi \frac{1}{\sqrt{N}}\psi_i\right|^K \right] \lesssim \E\left[\left(\frac{1}{N}\sumi |\psi_i|^2\right)^{K/2}\right] \leq \frac{1}{N} \sumi \E[|\psi_i|^K], \nn
\end{align}
so that
\begin{align}\label{hm:s.oi-1}
\forall K \ge 2,\quad \sup_N \E\left[|\D_N|^K\right] < \infty.
\end{align}

\subsection{Quasi-observed information}
Define the quasi-observed information matrix by
\begin{align}
\Gam_N(\theta) &:= -\frac{1}{N}\p_{\theta}^{2}\mbbh_{N}(\theta) = 
\begin{pmatrix}
\Gam_{N,11}(\theta) & \Gam_{N,12}(\theta) \\
\Gam_{N,12}(\theta)^\top & \Gam_{N,22}(\theta)
\end{pmatrix},
\label{ti:CovObsInf}
\end{align}
where
\begin{align}
\Gam_{N,11}(\theta) &:= \frac{1}{N} \sumi \{\Sig_i(v)^{-1} [(\p_{\beta} \mu_i(\beta))^{\otimes2}] - \Sig_i(v)^{-1}[\p_{\beta}^2\mu_i(\beta), Y_i - \mu_i(\beta)]\}, \nn \\
\Gam_{N,12}(\theta) &:= \frac{1}{N} \sumi \left((\Sig_i(v)^{-1} (\p_{v_j} \Sig_i(v)) \Sig_i(v)^{-1}) [\p_{\beta} \mu_i(\beta), Y_i - \mu_i(\beta)]\right)_{j = 1}^{p_v}, \nn \\
\Gam_{N,22}(\theta) &:= \frac{1}{N} \sumi \bigg((\Sig_i(v)^{-1} (\p_{v_j} \Sig_i(v)) \Sig_i(v)^{-1} (\p_{v_k} \Sig_i(v)) \Sig_i(v)^{-1}) [(Y_i - \mu_i(\beta))^{\otimes2}] \nn \\
&\left. \qquad -\frac{1}{2} (\Sig_i(v)^{-1} (\p_{v_j v_k}^2 \Sig_i(v)) \Sig_i(v)^{-1}) [(Y_i - \mu_i(\beta))^{\otimes2}] \right. \nn \\
&\qquad + \frac{1}{2} \tr\big(-\Sig_i(v)^{-1} (\p_{v_j} \Sig_i(v)) \Sig_i(v)^{-1} (\p_{v_k} \Sig_i(v)) \nn\\
&{}\qquad\qquad + \Sig_i(v)^{-1} (\p_{v_j v_k}^2 \Sig_i(v))\big)\bigg)_{j, k = 1}^{p_v}
\nn 
\end{align}
of sizes $p_\beta\times p_\beta$, $p_\beta\times p_v$, and $p_v\times p_v$, respectively.

As in Assumptions \ref{hm:A_3} and \ref{hm:A-S.lim}, we need the following for the asymptotic behavior of the non-random sequence $\Gam_N=\Gam_N(\tz)$ in \eqref{ti:CovObsInf}.

\begin{ass}\label{hm:A-Gam}
There exists a block-diagonal matrix
\begin{equation}\nn
    \Gam_0 = \mathrm{diag}(\Gam_{0,11},\Gam_{0,22})\in \mbbr^{p}\otimes\mbbr^{p}
\end{equation}
with both $\Gam_{0,11}\in \mbbr^{p_\beta}\otimes\mbbr^{p_\beta}$ and $\Gam_{0,22}\in \mbbr^{p_v}\otimes\mbbr^{p_v}$ being positive definite such that
\begin{equation}
    \sup_N \sqrt{N}\left|\frac{1}{N}\sumi\Sig_i^{-1} [(\p_{\beta} \mu_i)^{\otimes2}]
    - \Gam_{0,11}\right| < \infty
    \nn
\end{equation}
and that
\begin{equation}
    \sup_N \sqrt{N} \left|\frac{1}{N}\sumi
    \left(\frac{1}{2} \tr(\Sig_i^{-1} (\p_{v_j} \Sig_i) \Sig_i^{-1} (\p_{v_k} \Sig_i))\right)_{j, k = 1}^{p_v} - \Gam_{0,22}\right| < \infty.
    \nn
\end{equation}
\end{ass}

\medskip

Fix any $K \geq 2$ and write $\Gam_N=N^{-1}\sumi \Gam_{N,i}$.
Then, $\sup_{i\le N}\E[|\Gam_{N,i}|^K]=O(1)$.
It follows from Burkholder's inequality and Jensen's inequalities that 
\begin{align}
\sup_N\E\left[\left(\sqrt{N}|\Gam_N - \E[\Gam_N]|\right)^K\right] 
&\lesssim 
\sup_N\E\left[\left(\frac{1}{N} \sumi |\Gam_{N, i} - \E[\Gam_{N, i}]|^2\right)^{K/2}\right] \nn \\
& \leq \sup_N\frac{1}{N} \sumi \E\left[|\Gam_{N, i} - \E[\Gam_{N, i}]|^K\right] <\infty.
\label{hm:qoi-3}
\end{align}
We have $\E[\Gam_{N,12}]=0$ and it is easy to see that $\sup_N N^{1/2} |\E[\Gam_N] - \Gam_0| < \infty$ under Assumption \ref{hm:A-Gam}.
This combined with \eqref{hm:qoi-3} concludes
\begin{align}\label{hm:qoi-2}
\sup_N \E\left[\left(N^{1/2}|\Gam_N - \Gam_0|\right)^K\right] < \infty,
\end{align}
in particular $\Gam_N \cip \Gam_0$.

\subsection{Asymptotic normality and tail-probability estimate}
\label{hm:sec_an-tpe}
Let
\begin{equation}\nn
    \hat{u}_{N}:=\sqrt{N} (\tes - \tz)=\left(\sqrt{N} (\bes - \beta_0),\sqrt{N} (\hat{v}_N - v_0)\right).
\end{equation}
The following theorem is the main claim of this section.

\begin{thm} \label{ti:QLA.thm1}
Let Assumptions \ref{hm:A_1}, \ref{hm:A_2}, \ref{hm:A_3}, \ref{hm:A_4}, \ref{hm:A-S.lim}, and \ref{hm:A-Gam} hold.
\begin{enumerate}
\item \label{ti:QLA.thm1-1}
We have the stochastic expansion
\begin{align}\label{ti:QLA.thm1-(1)}
\hat{u}_{N}= G_{N,1} + \frac{1}{\sqrt{N}} G_{N,2} + O_p\left(\frac1N\right),
\end{align}
where
\begin{align}
G_{N,1} &= \Gam_0^{-1} \D_N, \label{hm:def_G_1N}\\
G_{N,2} &= \Gam_0^{-1} \bigg\{(\sqrt{N}(\Gam_0 - \Gam_N)) [\Gam_0^{-1}\D_N] 
\nn\\
&{}\qquad 
+ \frac{1}{2} \left(\frac{1}{N} \p_\theta^3 \mbbh_N(\tz)\right)[(\Gam_0^{-1} \D_N)^{\otimes 2}]\bigg\}.
\label{hm:def_G_2N}
\end{align}
In particular,
\begin{equation}\label{hm:u-hat-AN}
\hat{u}_{N} \cil N_p\left(0,\, \Gam_0^{-1} S_0 \Gam_0^{-1}\right).
\end{equation}

\item \label{ti:QLA.thm1-2}
For any $L > 0$ there exists a universal constant $C_L > 0$ for which
\begin{align}\label{ti:QLA.thm1-(2)}
\sup_N P[|\hat{u}_N| > r] \leq \frac{C_L}{r^L}, \qquad r > 0.
\end{align}

\end{enumerate}
\end{thm}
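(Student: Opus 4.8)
The plan is to follow the standard quasi-likelihood-analysis machinery of \cite{Yos11}, building on the four ingredients already established: the uniform moment bound \eqref{hm:lc-2} for the quasi-Kullback-Leibler divergence, the central-limit theorem \eqref{hm:D-clt} and moment bound \eqref{hm:s.oi-1} for the quasi-score $\D_N$, the rate-of-convergence estimate \eqref{hm:qoi-2} for the quasi-observed information $\Gam_N$, and the uniform-in-$\theta$ moment estimates of Lemma \ref{hm:lem_moment.estimate}. First I would record that, because of Assumption \ref{hm:A_3} together with \eqref{hm:Y-bm} and Lemma \ref{hm:lem_moment.estimate}, the higher-order derivatives are also under control: for $k\in\{3,4\}$ one has $\sup_N\E[\sup_\theta|N^{-1}\p_\theta^k\mbbh_N(\theta)|^K]<\infty$ for every $K>0$, and moreover $N^{-1}\p_\theta^3\mbbh_N(\tz)=O_p(1)$ with a non-random $\mcc^1$ limit; these follow by the same Burkholder/Sobolev argument already used for $\Gam_N$. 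This gives the polynomial-type large-deviation estimate for the quasi-likelihood ratio random field $\mbbz_N(u):=\exp\{\mbbh_N(\tz+u/\sqrt N)-\mbbh_N(\tz)\}$, namely that for every $L>0$ there is $C_L$ with $\pr[\sup_{|u|\ge r}\mbbz_N(u)\ge e^{-\chi_0 r^2/4}]\le C_L r^{-L}$ uniformly in $N$, which is the PLD inequality in the sense of \cite{Yos11}.

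Granting PLD, part \eqref{ti:QLA.thm1-2} is immediate: the tail estimate $\sup_N\pr[|\hat u_N|>r]\lesssim r^{-L}$ follows from the abstract theorem of \cite{Yos11} (or by the direct argument: on $\{|\hat u_N|>r\}$ the maximizer of $\mbbz_N$ lies outside the ball of radius $r$, so $\sup_{|u|\ge r}\mbbz_N(u)\ge\mbbz_N(0)=1>e^{-\chi_0 r^2/4}$ once $r$ is large). For part \eqref{ti:QLA.thm1-1}, since $\tes$ is consistent and $\Theta$ is an open convex set around $\tz$, with probability tending to one $\tes$ is an interior critical point, so $\p_\theta\mbbh_N(\tes)=0$; Taylor-expanding this around $\tz$ to third order gives, after dividing by $\sqrt N$,
\[
0=\D_N-\Gam_N[\hat u_N]+\frac{1}{2}\Big(\frac1N\p_\theta^3\mbbh_N(\tz)\Big)[\hat u_N^{\otimes2}]+R_N,
\]
where $R_N=O_p(|\hat u_N|^3)$ by the uniform bound on $N^{-1}\p_\theta^4\mbbh_N$. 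From the tail estimate $|\hat u_N|=O_p(1)$, and then from the leading-order identity $\hat u_N=\Gam_0^{-1}\D_N+o_p(1)$ (using $\Gam_N\cip\Gam_0$ and $\D_N=O_p(1)$), one obtains $\hat u_N=O_p(1)$; substituting this back and using \eqref{hm:qoi-2} to write $\Gam_N=\Gam_0-(\Gam_0-\Gam_N)$ with $\sqrt N(\Gam_0-\Gam_N)=O_p(1)$, a standard bootstrap (first $\hat u_N=\Gam_0^{-1}\D_N+O_p(N^{-1/2})$, then plug in again) yields exactly \eqref{ti:QLA.thm1-(1)} with $G_{N,1},G_{N,2}$ as in \eqref{hm:def_G_1N}--\eqref{hm:def_G_2N}; the $O_p(1/N)$ remainder collects the $R_N$ term, the cross terms between $\sqrt N(\Gam_0-\Gam_N)$ and the $N^{-1/2}$-correction, and the error in replacing $N^{-1}\p_\theta^3\mbbh_N(\tz)[(\hat u_N)^{\otimes2}]$ by $N^{-1}\p_\theta^3\mbbh_N(\tz)[(\Gam_0^{-1}\D_N)^{\otimes2}]$. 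Finally \eqref{hm:u-hat-AN} follows from \eqref{ti:QLA.thm1-(1)}, \eqref{hm:D-clt}, and Slutsky, since $G_{N,2}/\sqrt N=o_p(1)$.

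The main obstacle is establishing the PLD inequality with the correct uniformity in $N$ under the present unbalanced, non-i.i.d.\ sampling design: one must show the polynomial-order large-deviation bound $\sup_N\E[\sup_{|u|\le\rho_N}|\mathbb{M}_N(u)|^K]<\infty$ for the relevant random fields (the score and the normalized Hessian increments) without any stationarity or i.i.d.\ structure in $(X_i,Z_i)$. The key is that Lemma \ref{hm:lem_moment.estimate} converts these suprema into pointwise moments of martingale-difference sums — here the array $\{\xi_i(\theta)-\E[\xi_i(\theta)]\}_i$ and its $\theta$-derivatives form such a structure because the individuals are independent — and the pointwise moments are uniformly bounded thanks to Assumption \ref{hm:A_1}(2)--(3), Assumption \ref{hm:A_2}, and \eqref{hm:Y-bm}. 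Once this is in place, the rest is the routine QLA argument; the only care needed is to keep track of the $\sqrt N$-rates in Assumptions \ref{hm:A_3}, \ref{hm:A-S.lim}, \ref{hm:A-Gam} so that the deterministic-bias terms are genuinely $O(N^{-1/2})$ and get absorbed into $G_{N,2}/\sqrt N$ or the $O_p(1/N)$ remainder rather than polluting the first-order limit.
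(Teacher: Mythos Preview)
Your proposal is correct and follows essentially the same route as the paper: the paper also Taylor-expands the quasi-score equation $\D_N(\tes)=0$ around $\tz$, uses the first-order relation $\hat u_N=\Gam_0^{-1}\D_N+O_p(N^{-1/2})$ and then substitutes back to obtain the second-order expansion, invoking the uniform bound on $N^{-1}\p_\theta^{3,4}\mbbh_N$ (Lemma \ref{ti:boundary of three and forth dev}); part \eqref{ti:QLA.thm1-2} is likewise obtained by citing \cite[Theorem 3]{Yos11} from the estimates \eqref{hm:lc-2}, \eqref{hm:s.oi-1}, \eqref{hm:qoi-2}. One small scaling slip to fix in your displayed expansion: the cubic term should carry an extra $1/\sqrt N$, i.e.\ $0=\D_N-\Gam_N[\hat u_N]+\tfrac{1}{2\sqrt N}\big(\tfrac1N\p_\theta^3\mbbh_N(\tz)\big)[\hat u_N^{\otimes2}]+R_N$ with $R_N=O_p(N^{-1}|\hat u_N|^3)$, which is what makes $R_N=O_p(N^{-1})$ once $|\hat u_N|=O_p(1)$.
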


It immediately follows from \eqref{ti:QLA.thm1-(2)} that the random sequence $(\hat{u}_N)_N$ is $L^q$-bounded for any $q>0$, hence the convergence of moments $\E\left[f(\hat{u}_N)\right] \to \E\left[f(\hat{u}_0)\right]$, where $\hat{u}_0\sim N_{p}\left(0,\Gam_0^{-1}S_0\Gam_0^{-1}\right)$, holds for any continuous function $f$ of at most polynomial growth.
We note that $S_{12,0}=O$ if the distributions of $b_i$, $L_i(1)$, $\ep_i$ are all symmetric, so that
$\hat{u}_{N} \cil N_p\left(0,\, \diag\{\Gam_{0,11}^{-1} S_{0,11} \Gam_{0,11}^{-1},\, \Gam_{0,22}^{-1} S_{0,22} \Gam_{0,22}^{-1}\}\right)$.

\rrev{The proof procedure for Theorem \ref{ti:QLA.thm1} is outlined below. 
\begin{itemize}
    \item The stochastic expansion of $\hat{u}_N$ primarily based on the Taylor expansion of the quasi-score function. To evaluate $G_{N,2}$ and to show that the third- or higher-order terms in the expansion of $\hat{u}_N$ are $O_p(1/N)$, we introduce Lemma \ref{ti:boundary of three and forth dev} below.   
    \item The asymptotic normality of $\hat{u}_N$ is derived by combining its stochastic expansion with the asymptotic distribution of the quasi-score function \eqref{hm:D-clt}.
    \item The tail-probability estimate of $\hat{u}_N$ is deduced by applying \cite[Theorem 3]{Yos11}.
\end{itemize}
While the evaluation of $G_{N,2}$ is not strictly required for proving asymptotic normality of $\hat{u}_N$, it is indispensable for showing the difference from the stochastic expansion of the stepwise GQMLE discussed in the following section.
}

\begin{lem}
\label{ti:boundary of three and forth dev}
\begin{align}
\forall K > 0,\quad 
\sup_N \E \left[\sup_{\theta} \left(\left|\frac{1}{N} \p_{\theta}^3 \mbbh_N(\theta)\right|^K + \left|\frac{1}{N} \p_{\theta}^4 \mbbh_N(\theta)\right|^K\right)\right] < \infty.
\nn
\end{align}
\end{lem}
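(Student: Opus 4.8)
The plan is to reduce the claim to the uniform moment bounds already developed for first- and second-order quantities, since the third and fourth derivatives of $\mbbh_N$ have exactly the same structural form: each is a finite sum, over $i\le N$, of terms that are products of (i) matrix functions built from $\Sig_i(v)^{-1}$ and the derivatives $\p_v^k\Sig_i(v)$ with $k\le 4$, contracted either into a trace or into a multilinear form applied to copies of $\p_\beta^k\mu_i(\beta)$ and of the residual vector $Y_i-\mu_i(\beta)$. Concretely, I would first differentiate the three blocks $\Gam_{N,11},\Gam_{N,12},\Gam_{N,22}$ once and twice in $\theta$, using the identity $\p_v(\Sig_i^{-1})=-\Sig_i^{-1}(\p_v\Sig_i)\Sig_i^{-1}$ repeatedly; this shows that $N^{-1}\p_\theta^3\mbbh_N(\theta)$ and $N^{-1}\p_\theta^4\mbbh_N(\theta)$ are of the form $N^{-1}\sumi R_{N,i}(\theta)$ where each $R_{N,i}(\theta)$ is a polynomial, with coefficients that are continuous matrix functions of $\theta$, in the entries of $Y_i-\mu_i(\beta)$ of degree at most $2$ (for the quadratic-form terms) plus deterministic trace terms.

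Next I would establish the pointwise moment bound
\begin{equation}\nn
\sup_{N\ge 1}\max_{i\le N}\sup_{\theta\in\Theta}\E\big[|R_{N,i}(\theta)|^K\big]<\infty\qquad\text{for every }K>0,
\end{equation}
together with the same bound for $\p_\theta R_{N,i}$. This follows from three inputs already in the paper: Assumption \ref{hm:A_1}(2)--(3), which gives uniform-in-$(\theta,i,N)$ bounds on $\Sig_i(v)^{-1}$ (hence on $\lambda_{\min}$, hence on the operator norm of the inverse) and on all derivatives $\p_\beta^k\mu_i$, $\p_v^k\Sig_i$ up to order $4$; the moment bound \eqref{hm:Y-bm}, i.e. $\sup_N\max_{i\le N}\E[|Y_i|^K]<\infty$ for all $K$, which controls the residual factors $|Y_i-\mu_i(\beta)|\le |Y_i|+\sup_\beta|\mu_i(\beta)|$; and the boundedness of $\Theta$, which keeps the $\theta$-dependence of all coefficients uniformly bounded and $\mcc^1$. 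Multiplying out and applying Hölder's inequality then yields the stated uniform pointwise moment bound; the same argument applied to one more derivative handles $\p_\theta R_{N,i}$ since $\p_\theta^5\mu$ and $\p_v^5\Sig$ do not appear—wait, one must note that differentiating $\p_\theta^4\mbbh_N$ would require fifth derivatives, so here I only need $\p_\theta$ of $\p_\theta^3\mbbh_N$, i.e. fourth derivatives, which are covered by $\mcc^4$; for the $\p_\theta^4\mbbh_N$ supremum I instead invoke the first inequality of Lemma \ref{hm:lem_moment.estimate} directly, which only costs one extra derivative, again fourth order. So the $\mcc^4$ hypothesis is exactly enough.

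Finally I would pass from the pointwise bound to the uniform-in-$\theta$ bound by the Sobolev-inequality estimate of Lemma \ref{hm:lem_moment.estimate} (its first inequality): for $m\in\{3,4\}$,
\begin{align}\nn
\E\Big[\sup_\theta\big|\tfrac1N\p_\theta^m\mbbh_N(\theta)\big|^K\Big]
&=\E\Big[\sup_\theta\big|\tfrac1N\sumi R^{(m)}_{N,i}(\theta)\big|^K\Big]\\\nn
&\lesssim \sup_\theta\E\Big[\big|\tfrac1N\sumi R^{(m)}_{N,i}(\theta)\big|^K\Big]
+\sup_\theta\E\Big[\big|\tfrac1N\sumi \p_\theta R^{(m)}_{N,i}(\theta)\big|^K\Big],
\end{align}
and each right-hand side term is $\le \sup_\theta\max_{i\le N}\E[|R^{(m)}_{N,i}(\theta)|^K]$-type quantity by Jensen's inequality applied to the normalized sum, hence bounded uniformly in $N$ by the previous step. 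Taking $q=K$ large enough that $q>p\vee 2$ (which is no loss since we may always enlarge $K$) legitimizes the use of the lemma. The one point requiring care is bookkeeping: one must be sure that differentiating $\p_\theta^3\mbbh_N$ once more in $\theta$ never produces a derivative of $\mu_i$ or $\Sig_i$ of order exceeding $4$, which is why I organize the derivative expansion explicitly rather than by induction. That combinatorial verification—rather than any genuine estimate—is the main obstacle, and it is routine given Assumption \ref{hm:A_1}.
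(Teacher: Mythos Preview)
Your approach via Lemma \ref{hm:lem_moment.estimate} works for $\p_\theta^3\mbbh_N$ but does not close for $\p_\theta^4\mbbh_N$. The first inequality of that lemma, applied to $\theta\mapsto N^{-1}\p_\theta^4\mbbh_N(\theta)$, requires a bound on $\sup_\theta\E\big[|N^{-1}\sumi \p_\theta R^{(4)}_{N,i}(\theta)|^K\big]$, i.e.\ on $N^{-1}\p_\theta^5\mbbh_N$, and this involves fifth-order partial derivatives of $\mu_i$ and $\Sig_i$. Assumption \ref{hm:A_1} only provides $\mcc^4$-regularity, so the phrase ``which only costs one extra derivative, again fourth order'' is mistaken: it costs one derivative \emph{beyond} the fourth. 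As written, the argument for the fourth-derivative part is circular.

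The paper sidesteps the issue by not using the Sobolev lemma at all. It writes out the components of $\p_\theta^3\mbbh_N(\theta)$ explicitly and observes (the same holds for $\p_\theta^4\mbbh_N$) that each summand $R_{N,i}(\theta)$ is at most quadratic in $Y_i-\mu_i(\beta)$, with deterministic coefficient functions of $\theta$ that are continuous on the compact set $\overline{\Theta}$ and bounded uniformly in $i,N,\theta$ by Assumption \ref{hm:A_1}(2)--(3). This yields a pointwise-in-$\omega$ estimate $\sup_\theta|R_{N,i}(\theta)|\le C(1+|Y_i|^2)$ for a universal constant $C$, hence
\[
\sup_\theta\Big|\frac1N\sumi R_{N,i}(\theta)\Big|\le \frac{C}{N}\sumi(1+|Y_i|^2),
\]
and Jensen's inequality together with \eqref{hm:Y-bm} gives the uniform $K$-moment bound directly, with no extra differentiation needed. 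Your structural analysis of the derivatives is correct and is exactly what underlies this pathwise bound; the only fix required is to take the supremum over $\theta$ \emph{before} the expectation, exploiting that the $\theta$-dependence sits entirely in deterministic, uniformly bounded coefficients.
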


\begin{proof}
The components of the third-order derivative $\p_{\theta}^3 \mbbh_N(\theta)$ are explicitly given as follows:
 \begin{align}
\p_{\beta}^3 \mbbh_N(\theta) &= \sumi \{\Sig_i(v)^{-1}[\p_{\beta}^3\mu_i(\beta), Y_i-\mu_i(\beta)] - 3\Sig_i(v)^{-1}[\p_{\beta}^2\mu_i(\beta), \p_{\beta}\mu_i(\beta)]\} \nn \\
 \p_{\beta}^2 \p_{v} \mbbh_N(\theta) &= \sumi \left(\p_{v_j} \Sig_i(v)^{-1} [(\p_{\beta} \mu_i(\beta))^{\otimes2}]\right)_{j=1}^{p_v}, \nn \\
 \p_{\beta} \p_{v}^2 \mbbh_N(\theta) &= \sumi \left(\p_{v_j v_k}^2 \Sig_i(v)^{-1} [\p_{\beta} \mu_i(\beta), Y_i - \mu_i(\beta)]\right)_{j, k=1}^{p_v}, \nn \\
 \p_{v}^3 \mbbh_N(\theta) &= \sumi \bigg(-\frac{1}{2} \p_{v_j v_k v_l}^3 \Sig_i(v)^{-1} [(Y_i - \mu_i(\beta))^{\otimes2}] \nn \\
&{} \qquad + \frac{1}{2} \tr\left\{-\p_{v_l}\left(\Sig_i(v)^{-1}(\p_{v_j} \Sig_i(v)) \Sig_i(v)^{-1} (\p_{v_k} \Sig_i(v)^{-1})\right)\right\}
\nn\\
&{}\qquad
+ \p_{v_l}\left(\Sig_i(v)^{-1} \p_{v_j v_k}^2\Sig_i(v)\right)\bigg
)_{j, k, l=1}^{p_v}.
\nn
\end{align}
Recalling \eqref{hm:Y-bm}, it is easy to see that $\sup_N \E [\sup_{\theta}| N^{-1} \p_{\theta}^3 \mbbh_N(\theta)|^K] < \infty
$.
The case of the fourth-order derivative $\p_{\theta}^4 \mbbh_N(\theta)$ is similar, hence omitted.
\end{proof}

\begin{proof}[Proof of Theorem \ref{ti:QLA.thm1}]\eqref{ti:QLA.thm1-1} By the Taylor expansion of $\D_N(\tes)$ around $\tz$, 
\begin{equation}
\D_N(\tes) = \D_N + \frac{1}{N} \p_{\theta}^2 \mbbh_N(\tz)[\hat{u}_N] + \frac{1}{2\sqrt{N}} \frac{1}{N} \p_{\theta}^3\mbbh_N(\widetilde{\theta}_N)[\hat{u}_N^{\otimes2}], \nn
\end{equation}
where $|\widetilde{\theta}_N-\tz|\leq|\tes-\tz|$.
By the consistency, we may and do set $\D_N(\tes)=0$; similar remarks apply to the stepwise version in Section \ref{hm:sec_stepwise}. Then,
\begin{align}
\Gam_0[\hat{u}_N]
&= \D_N + \frac{1}{\sqrt{N}}\left(\frac{1}{\sqrt{N}}(\p_{\theta}^2\mbbh_N(\tz)+N\Gam_0)[\hat{u}_N]\right)
+ \frac{1}{2\sqrt{N}} \left(\frac{1}{N} \p_{\theta}^3\mbbh_N(\widetilde{\theta}_N)[\hat{u}_N^{\otimes2}]\right) \nn \\
&= \D_N + \frac{1}{\sqrt{N}}\bigg((-\sqrt{N}(\Gam_N - \Gam_0))[\hat{u}_N] 
+ \frac{1}{2N} \p_{\theta}^3 \mbbh_N(\widetilde{\theta}_N)[\hat{u}_N^{\otimes 2}]\bigg).
\label{ti:2nd.se1}
\end{align}
It follows that
\begin{equation}
\hat{u}_N = \Gam_0^{-1} \D_N + O_p(N^{-1/2}).
\label{ti:1st.se}
\end{equation}
By \eqref{hm:D-clt} and \eqref{hm:qoi-2}, we get $(\D_N, \Gam_N) \cil (S_0^{1/2} \eta, \Gam_0)$, where $\eta \sim N_p(0, I_p)$.
Hence \eqref{ti:1st.se} gives \eqref{hm:u-hat-AN}.
Substituting $(\ref{ti:1st.se})$ for the right-hand side of $(\ref{ti:2nd.se1})$, we get
\begin{align}
\Gam_0[\hat{u}_N] &= \D_N + \frac{1}{\sqrt{N}}\bigg\{(-\sqrt{N}(\Gam_N - \Gam_0))[\Gam_0^{-1}\D_N] \nn\\
&{}\qquad + \frac{1}{2N} \p_{\theta}^3 \mbbh_N(\widetilde{\theta}_N)[(\Gam_0^{-1}\D_N)^{\otimes 2}]\bigg\} + O_p(N^{-1}).
\label{ti:2nd.se2}
\end{align}
By Lemma \ref{ti:boundary of three and forth dev}, we have $N^{-1}\p_{\theta}^3 \mbbh_N(\widetilde{\theta}_N) = N^{-1}\p_{\theta}^3 \mbbh_N(\tz) + O_p(N^{-1/2})$.
Therefore,
\begin{align}
\hat{u}_N &= \Gam_0^{-1}\D_N+ \frac{1}{\sqrt{N}} \Gam_0^{-1} \bigg\{(-\sqrt{N}(\Gam_N - \Gam_0)) [\Gam_0^{-1}\D_N] 
\nn\\
&{}\qquad + \frac{1}{2} \left(\frac{1}{N}\p_{\theta}^3\mbbh_N(\tz)\right)[(\Gam_0^{-1} \D_N)^{\otimes 2}]\bigg\} + O_p(N^{-1}).
\nn
\end{align}
This completes the proof of (1).

\eqref{ti:QLA.thm1-2}
Based on the estimates \eqref{hm:lc-2}, \eqref{hm:s.oi-1}, \eqref{hm:qoi-2}, and Lemma \ref{ti:boundary of three and forth dev}, the claim 
readily follows from the general machinery of \cite[Theorem 3]{Yos11}.
\end{proof}

\rev{
\begin{rem}
The asymptotic covariance of $\widehat{u}_N$ given by \eqref{hm:u-hat-AN} is composed of third and fourth moments of the random-effect, the system noise, and the measurement error from \eqref{ti:CovScore}, \eqref{ti:CovObsInf}. Therefore, as the higher-order moments of these random elements become larger, the standard error of the joint GQMLE is likely to become larger. 
The same holds for the asymptotic covariance of the stepwise GQMLE, given by \eqref{hm:stepwise.se} in the same form as in \eqref{hm:u-hat-AN}.
\end{rem}
}

We now discuss how to construct an approximate confidence set.
Let
\begin{equation}\nn
    \hat{A}_{i,j} := \Sig_i(\hat{v}_N)^{-1}(\p_{v_j}\Sig_i(\hat{v}_N)) \Sig_i(\hat{v}_N)^{-1}.
\end{equation}
Let
\begin{equation}\nn
\hat{S}_N := 
\begin{pmatrix}
\hat{S}_{N,11} & \hat{S}_{N,12} \\
\hat{S}_{N,12}^\top & \hat{S}_{N,22}
\end{pmatrix},
\qquad 
\hat{\Gam}_{N}=\diag(\hat{\Gam}_{N,11},\hat{\Gam}_{N,22}),
\end{equation}
where
\begin{align}
\hat{S}_{N,11} &= \frac{1}{N} \sumi \Sig_i^{-1}(\hat{v}_N) [(\p_{\beta}\mu_i(\hat{\beta}_N))], \nn \\
\hat{S}_{N,12} &= \frac{1}{2N} \sumi \left(\p_{\beta}\mu_i(\hat{\beta}_N) \Sig_i^{-1}(\hat{v}_N) (Y_i-\mu_i(\hat{\beta}_N))^{\otimes2}\hat{A}_{ij} (Y_i-\mu_i(\hat{\beta}_N))\right)_{j=1}^{p_v}, \nn \\
\hat{S}_{N,22} &= \frac{1}{4N} \sumi \bigg(\tr\left(\hat{A}_{ij}(Y_i-\mu_i(\hat{\beta}_N))^{\otimes2}\right)\cdot \tr\left(\hat{A}_{ik}(Y_i-\mu_i(\hat{\beta}_N))^{\otimes2}\right) \nn \\
&{}\qquad - \tr\left(\Sig_i^{-1}(\hat{v}_N) \p_{v_j}\Sig_i(\hat{v}_N)\right)\cdot \tr\left(\Sig_i^{-1}(\hat{v}_N) \p_{v_k}\Sig_i(\hat{v}_N)\right)\bigg)_{j, k=1}^{p_v},
\nn\\
   \hat{\Gam}_{N,11} &:= \frac{1}{N}\sumi\Sig_i(\hat{v}_N)^{-1} [(\p_{\beta} \mu_i(\hat{\beta}_N))^{\otimes2}]
   \nn\\
   \hat{\Gam}_{N,22} &:= 
   \frac{1}{2N}\sumi
    \left(\tr(\Sig_i(\hat{v}_N)^{-1} (\p_{v_j} \Sig_i(\hat{v}_N)) \Sig_i(\hat{v}_N)^{-1} (\p_{v_k} \Sig_i(\hat{v}_N)))\right)_{j, k = 1}^{p_v}.
   \nn
\end{align}
Since $(\hat{\beta}_N,\hat{v}_N)=(\beta_0,v_0)+O_p(N^{-1/2})$, we have $(\hat{S}_N, \hat{\Gam}_N)\cip (S_0,\Gam_0)$. This shows the following result.

\begin{cor}
\label{hm:cor_jASN}
Under the assumptions in Theorem \ref{ti:QLA.thm1}, we have
\begin{equation}\label{hm:jASN}
\left(\hat{\Gam}_N^{-1} \hat{S}_N \hat{\Gam}_N^{-1}\right)^{-1/2}
\hat{u}_N \cil N_p(0,I_p).
\end{equation}
\end{cor}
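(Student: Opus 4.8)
The plan is to deduce the corollary from the asymptotic normality \eqref{hm:u-hat-AN} of Theorem \ref{ti:QLA.thm1} by establishing the consistency $(\hat{S}_N,\hat{\Gam}_N)\cip(S_0,\Gam_0)$ asserted just above, and then applying the continuous mapping theorem followed by Slutsky's theorem. The only point needing real work is this consistency; the rest is automatic.

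First I would prove $(\hat{S}_N,\hat{\Gam}_N)\cip(S_0,\Gam_0)$. Every entry of these matrices has the form $N^{-1}\sumi g_{i}(\hat{\beta}_N,\hat{v}_N;Y_i)$, with $g_i$ assembled from $\Sig_i(v)^{-1}$, its $v$-derivatives, $\p_\beta\mu_i(\beta)$, and quadratic forms in $Y_i-\mu_i(\beta)$. By Assumption \ref{hm:A_1} (using part (2) to keep $\Sig_i(v)^{-1}$ and its derivatives uniformly bounded on $\overline{\Theta}$), these $g_i$ are $\mcc^{1}$ on $\overline{\Theta}$ with $\theta$-derivatives dominated by $C(1+|Y_i|^{c})$ for fixed constants $C,c$, uniformly in $i,N$. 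Hence
\[
\left|\frac{1}{N}\sumi\big(g_i(\hat{\beta}_N,\hat{v}_N;Y_i)-g_i(\beta_0,v_0;Y_i)\big)\right| \lesssim |\tes-\tz|\cdot\frac{1}{N}\sumi\big(1+|Y_i|^{c}\big),
\]
and this is $o_p(1)$ since the average is $O_p(1)$ by \eqref{hm:Y-bm} and $|\tes-\tz|=O_p(N^{-1/2})$ by Theorem \ref{ti:QLA.thm1}. It remains to handle $N^{-1}\sumi g_i(\beta_0,v_0;Y_i)$. For the entries free of $Y_i$ (namely $\hat{S}_{N,11}$ and all blocks of $\hat{\Gam}_N$ evaluated at $\theta=\tz$) this is a deterministic sequence coinciding with the quantities in Assumptions \ref{hm:A-S.lim} and \ref{hm:A-Gam}, hence convergent to the relevant block of $S_0$ or $\Gam_0$. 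For the remaining entries ($\hat{S}_{N,12}$ and $\hat{S}_{N,22}$ at $\tz$), their expectations equal $S_{N,12}$ and $S_{N,22}$ by construction, which tend to $S_{0,12},S_{0,22}$ by Assumption \ref{hm:A-S.lim}, while their variances are $O(1/N)$ thanks to the i.i.d.\ array structure of the individuals and \eqref{hm:Y-bm}; Chebyshev's inequality then completes the step. This is the same pattern as the proof of $\Gam_N\cip\Gam_0$ leading to \eqref{hm:qoi-2}.

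Second, because $S_0$ and $\Gam_0$ are symmetric and positive definite, the map $(S,\Gam)\mapsto(\Gam^{-1}S\Gam^{-1})^{-1/2}$ is well-defined and continuous on a neighbourhood of $(S_0,\Gam_0)$ in the set of pairs of symmetric positive-definite matrices, so the continuous mapping theorem gives $(\hat{\Gam}_N^{-1}\hat{S}_N\hat{\Gam}_N^{-1})^{-1/2}\cip(\Gam_0^{-1}S_0\Gam_0^{-1})^{-1/2}$. Slutsky's theorem then combines this with \eqref{hm:u-hat-AN} to yield
\[
(\hat{\Gam}_N^{-1}\hat{S}_N\hat{\Gam}_N^{-1})^{-1/2}\hat{u}_N\cil(\Gam_0^{-1}S_0\Gam_0^{-1})^{-1/2}W,
\]
where $W\sim N_p(0,\Gam_0^{-1}S_0\Gam_0^{-1})$; and the limiting law is $N_p(0,I_p)$ since $(\Gam_0^{-1}S_0\Gam_0^{-1})^{-1/2}(\Gam_0^{-1}S_0\Gam_0^{-1})(\Gam_0^{-1}S_0\Gam_0^{-1})^{-1/2}=I_p$.

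The main (and essentially only) obstacle is the consistency of $\hat{S}_N$; within it, the subtle point is that $\hat{S}_{N,12},\hat{S}_{N,22}$ are empirical averages of realized quadratic forms in $Y_i-\mu_i(\hat{\beta}_N)$, whereas $S_{0,12},S_{0,22}$ are defined through their expectations — the bridge is provided by \eqref{hm:Y-bm} (all moments of $Y_i$ finite, a consequence of Assumption \ref{hm:A_2}) together with $S_N\to S_0$ from Assumption \ref{hm:A-S.lim}. Everything after that is a routine application of the continuous mapping and Slutsky theorems.
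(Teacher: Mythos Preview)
Your proposal is correct and follows the same route as the paper, which merely asserts in one line that $(\hat{\beta}_N,\hat{v}_N)=(\beta_0,v_0)+O_p(N^{-1/2})$ implies $(\hat S_N,\hat\Gam_N)\cip(S_0,\Gam_0)$ and states the corollary; you have simply written out the details of that implication (mean-value bound for the plug-in step, Assumptions \ref{hm:A-S.lim} and \ref{hm:A-Gam} for the deterministic pieces, and an independence-plus-moment-bound Chebyshev argument for the random pieces of $\hat S_N$). One nitpick: the summands across $i$ are independent but not identically distributed, since the design matrices and sampling times vary with $i$; your Chebyshev step only needs independence and the uniform moment bound \eqref{hm:Y-bm}, so the conclusion is unaffected.
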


When $\mu_i(\beta)=X_i\beta$, we can obtain an estimator of the $p$-value for the significance of each component of the explanatory process $X$; see also Section \ref{hm:sec_linear.case}. 
Note that the Studentization \eqref{hm:jASN} does not require us to know beforehand if the model is Gaussian or not.

\begin{rem}[Gaussian case]\label{ti:GQLME-AN-Gauss}
The previous study \cite{ImaMasTaj24} derived the local asymptotic normality and asymptotic optimality of the local maximum-likelihood estimator when the model is fully Gaussian so that the Gaussian quasi-likelihood $\mbbh_N(\theta)$ becomes the genuine log-likelihood.
As mentioned before, we have $S_{N,12}=0$ because of the symmetry of the Gaussian distribution.
Moreover, by \cite[Theorem 4.2]{MagNeu1979},
\begin{align}
S_{N,22} &= \frac{1}{N} \sumi \bigg(\frac{1}{4}\tr\left(\Sig_i^{-1}\p_{v_j}\Sig_i\right)\cdot \tr\left(\Sig_i^{-1}\p_{v_k}\Sig_i\right) \nn \\
&{}\qquad +\frac{1}{2}\tr\left(\Sig_i(v_0)^{-1}(\p_{v_j}\Sig_i)\Sig_i^{-1}(\p_{v_k}\Sig_i)\right) \nn\\
&{}\qquad 
- \frac{1}{4}\tr\left(\Sig_i^{-1}\p_{v_j}\Sig_i\right)\cdot \tr\left(\Sig_i^{-1}\p_{v_k}\Sig_i\right)\bigg)_{j, k=1}^{p_v} \nn \\
&=\frac{1}{2N} \sumi \Big(\tr\left(\Sig_i^{-1}(\p_{v_j}\Sig_i)\Sig_i^{-1}(\p_{v_k}\Sig_i)\right)\Big)_{j, k=1}^{p_v}.
\nn
\end{align}
We have $\E[\Gam_N] = S_N$ and consequently $\sqrt{N}(\tes-\tz) \cil N_p(0, S_0^{-1})$, where $S_0$ is the Fisher information matrix. It follows that, when the marginal distribution is truly Gaussian, any estimator $\tes'$ that satisfies $\sqrt{N}(\tes'-\tz)=\Gam_0^{-1} \D_N+o_p(1)$ is asymptotically efficient.
\end{rem}

\begin{rem}
Our proof based on \cite{Yos11} may apply to a broader situation where, for example, the random-effect sequences $b_1,b_2,\dots$ are not mutually independent. Under suitable additional requirements such as the strict stationarity exponential-mixing Markov property and the boundedness of moments, it would be possible to deduce similar results to Theorem \ref{ti:QLA.thm1} and Corollary \ref{hm:cor_jASN} with the same quasi-likelihood $\mbbh_N(\theta)$; this point may be related to the fact that the stationary (invariant) distribution of a Markov chain contains enough information; we refer \cite{KesSchWef01} for related details and also to \cite[Remark 2.4]{MasMerUeh24} for a related remark.
For example, one may think of the following situation: let $\{Y_i(t_{ij})\}_{j\le 24}$ denote $i$-day longitudinal data from a subject for which we obtain hourly data every day. In that case, one natural way to model the dependence of the ``daily'' data set sequence $Y_1, Y_2,\dots$ would be to make $b_1,b_2,\dots$ serially dependent. 
The same remarks apply to the stepwise procedure presented in the next section.
\end{rem}

\section{Stepwise Gaussian quasi-likelihood analysis}
\label{hm:sec_stepwise}

\subsection{Construction and asymptotics}
\label{ti:subsec_stepwise_construction}

The joint estimation of all parameters can be computationally demanding in our mixed-effects model setup due to the covariance function's non-linear dependence on some parameters; we will see some quantitative differences in computation time in Section \ref{sec:simulations}. 
To mitigate this issue, in this section, we will propose a stepwise estimation procedure which goes as follows:
\begin{description}
\item[Stage 1] \label{ti: Stage1} 
Preliminary least-squares estimator 
$\widetilde{\beta}_{N,1} \in \argmax_{\beta \in \overline{\Theta}_\beta} \mbbh_{N,(1)}(\beta)$ for the mean, where 
\begin{align}
\mbbh_{N,(1)}(\beta) := \sumi \log \phi_{n_i}(Y_i; \mu_i(\beta), I_{n_i}),
\nn
\end{align}
which is designed based on fitting the homoscedastic Gaussian distribution.

\item[Stage 2] \label{ti: Stage2} 
Mean-adjusted covariance estimator 
$\widetilde{v}_N \in \argmax_{v \in \overline{\Theta}_v} \mbbh_{N,(2)}(v)$, where
\begin{align}
\mbbh_{N,(2)}(v) := \mbbh_N(\widetilde{\beta}_{N, 1}, v) = \sumi \log \phi_{n_i}(Y_i; \mu_i(\widetilde{\beta}_{N, 1}), \Sig_i(v)). \nn
\end{align}

\item[Stage 3] \label{ti: Stage3} 
Improved $\widetilde{\beta}_N \in \argmax_{\beta \in \overline{\Theta}_\beta} \mbbh_{N,(3)}(\beta)$, where
\begin{align}
\mbbh_{N,(3)}(\beta) := \mbbh_N(\beta, \widetilde{v}_N) = \sumi \log \phi_{n_i}(Y_i; \mu_i(\beta), \Sig_i(\widetilde{v}_N)),
\nn 
\end{align}
which is the re-weighted Gaussian fitting to take the heteroscedastic nature into account, thus improving Stage 1.
\end{description}
Let us call $\widetilde{\theta}_N := (\widetilde{\beta}_N, \widetilde{v}_N)$ the \textit{stepwise GQMLE}. 
The estimators at Stage 1 and 3 are explicit if $\mu_i(\beta)=X_i^\top \beta$; see Section \ref{hm:sec_linear.case}. Numerical optimization in the second stage can still be time-consuming due to the non-linear dependence on $\lam$; recall the expression \eqref{hm:Sig_def}.

We will investigate the asymptotic behaviors of the stepwise GQMLE as in Theorem \ref{ti:QLA.thm1}.
Define the following variants of the quasi-score function and the quasi-observed information matrix for the GQLF $\mbbh_{N,(1)}(\beta)$ in Stage 1:
\begin{align}
\D_{N,(1)} &:= \frac{1}{\sqrt{N}}\p_{\beta}\mbbh_{N,(1)}(\beta_0) = \frac{1}{\sqrt{N}} \sumi [\p_{\beta} \mu_i, Y_i - \mu_i], \nn \\
\Gam_{N,(1)} &:= - \frac{1}{N} \p_{\beta}^2 \mbbh_{N,(1)}(\beta_0) = \frac{1}{N} \sumi \left\{(\p_{\beta}\mu_i)^{\otimes2} - \p_{\beta}^2\mu_i[Y_i-\mu_i]\right\}.
\nn 
\end{align}
Let $\widetilde{u}_N := \sqrt{N}(\widetilde{\theta}_N - \tz)=(\sqrt{N} (\widetilde{\beta}_N - \beta_0),\sqrt{N} (\widetilde{v}_N - v_0))$.

\begin{thm} \label{ti:QLA.thm2}
Suppose that Assumptions \ref{hm:A_1}, \ref{hm:A_2}, \ref{hm:A_3}, \ref{hm:A_4}, \ref{hm:A-S.lim}, and \ref{hm:A-Gam} hold. Moreover, suppose that there exist a positive definite matrix $\Gam_{0,(1)} \in \mbbr^{p_{\beta}} \otimes \mbbr^{p_{\beta}}$ and a measurable function $F_{1,2}(\beta)$ such that
\begin{align}
& \sup_N \sqrt{N}\left|
\frac{1}{N} \sumi (\p_{\beta}\mu_i)^{\otimes2}
- \Gam_{0,(1)}\right| < \infty,
\label{ti: QLA.ass2.2}
\\
& \sup_N \sup_{\beta \in \Theta_{\beta}} \sqrt{N} \left|\frac{1}{N} \sumi (\mu_i(\beta) - \mu_i(\beta_0))^{\otimes2} - F_{1,2}(\beta)\right| < \infty,
\label{ti:QLA.ass2.6}
\end{align}
and that there exists a constant $\chi_{1}>0$ such that $F_{1,2}(\beta) \ge \chi_1|\beta-\beta_0|^2$ for every $\beta\in\Theta_\beta$.
\begin{enumerate}
\item \label{ti:QLA.thm2-2}
We have the stochastic expansion
\begin{align}
\widetilde{u}_N 
= G_{N,1} + \frac{1}{\sqrt{N}} \widetilde{G}_{N,2} + O_p(N^{-1})
\cil N_p\left(0,\, \Gam_0^{-1} S_0 \Gam_0^{-1}\right),
\label{hm:stepwise.se}
\end{align}
where $G_{N,1}$ is the same as in \eqref{hm:def_G_1N} and 
$\widetilde{G}_{N,2} = (\widetilde{G}_{N,2,\beta},\, \widetilde{G}_{N,2,v})$ with
\begin{align} \label{ti:def_G_2N_beta}
\widetilde{G}_{N,2,\beta} &:= \Gam_{0,11}^{-1} \bigg\{\sqrt{N}(\Gam_{0,11} - \Gam_{N,11})[\Gam_{0,11}^{-1}\D_{N,\beta}] \nn\\
&{}\qquad+ \frac{1}{\sqrt{N}} \p_{\beta}\p_v \mbbh_N(\tz)[\Gam_{0,11}^{-1}\D_{N,v}] \nn \\
&\qquad + \frac{1}{N}\p_{\beta}^2\p_v\mbbh_N(\tz)[\Gam_{0,11}^{-1}\D_{N,\beta}, \Gam_{0,22}^{-1}\D_{N,v}] \nn\\
&{}\qquad + \frac{1}{2N}\p_{\beta}^3\mbbh_N(\tz)[(\Gam_{0,11}^{-1}\D_{N,\beta})^{\otimes2}]\bigg\}, 
\end{align}
\begin{align} \label{ti:def_G_2N_nu}
\widetilde{G}_{N,2,v} &:= \Gam_{0,22}^{-1}\bigg\{\sqrt{N}(\Gam_{0,22} - \Gam_{N,22})[\Gam_{0,22}^{-1}\D_{N,v}] \nn\\
&{}\qquad + \frac{1}{\sqrt{N}} \p_v\p_{\beta}\mbbh_N(\tz)[\Gam_{0,(1)}^{-1}\D_{N,(1)}]  \nn \\
&\qquad  + \frac{1}{2N}\p_v\p_{\beta}^2\mbbh_N(\tz)[(\Gam_{0,(1)}^{-1}\D_{N,(1)})^{\otimes2}] \nn\\
&{}\qquad + \frac{1}{2N}\p_v^3\mbbh_N(\tz)[(\Gam_{0,22}^{-1}\D_{N,v})^{\otimes2}] \bigg\}. 
\end{align}

\item \label{ti:QLA.thm2-1}
For any $L>0$, we can find a universal constant $C_L>0$ for which
\begin{align}
\sup_N \pr\left[ |\widetilde{u}_{N}| > r\right] \le \frac{C_L}{r^L},
\qquad r>0.
\nn 
\end{align}

\end{enumerate}
\end{thm}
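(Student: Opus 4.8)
The plan is to follow the template of the proof of Theorem~\ref{ti:QLA.thm1} while carrying the estimation error through the three nested stages. At first order all three stages collapse to the joint leading term $G_{N,1}=\Gam_0^{-1}\D_N$ (using the block-diagonality of $\Gam_0$); at second order the preliminary Stage~1 fit leaves a trace in $\tilde v_N$, which in turn leaves a trace in $\tilde\beta_N$, and this is exactly why the stepwise and joint expansions agree only to first order. I would first establish consistency of each stage in the strong form of \eqref{hm:lc-2}. For Stage~1, $N^{-1}(\mbbh_{N,(1)}(\beta)-\mbbh_{N,(1)}(\beta_0))$ equals $-\tfrac12 N^{-1}\sumi(\mu_i(\beta)-\mu_i(\beta_0))^{\otimes2}$ plus a centered fluctuation, so \eqref{ti:QLA.ass2.6} and Lemma~\ref{hm:lem_moment.estimate} give uniform convergence, at rate $\sqrt N$ in every $L^q$, to $-\tfrac12 F_{1,2}$, and the bound $F_{1,2}(\beta)\ge\chi_1|\beta-\beta_0|^2$ yields $\tilde\beta_{N,1}\cip\beta_0$. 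For Stage~2, $\mbbh_{N,(2)}(v)=\mbbh_N(\tilde\beta_{N,1},v)$ differs from $\mbbh_N(\beta_0,v)$ by a remainder controlled by $|\tilde\beta_{N,1}-\beta_0|$ uniformly in $v$ (Assumption~\ref{hm:A_1}), so the corresponding divergence still converges uniformly to $\tfrac12 F_{0,1}$ and Assumption~\ref{hm:A_4} gives $\tilde v_N\cip v_0$; Stage~3 is the same, using continuity of $F_{0,2}$ in $v$, $\tilde v_N\cip v_0$, and $F_{0,2}(\beta,v_0)\ge\chi_0|\beta-\beta_0|^2$.

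For the stochastic expansion I would Taylor-expand each first-order condition --- $\p_\beta\mbbh_{N,(1)}(\tilde\beta_{N,1})=0$, $\D_{N,v}(\tilde\beta_{N,1},\tilde v_N)=0$, $\D_{N,\beta}(\tilde\beta_N,\tilde v_N)=0$ --- about the truth, exactly in the manner of \eqref{ti:2nd.se1}--\eqref{ti:2nd.se2}, substituting the lower-order expansion of each stage into the next. Stage~1 gives $\sqrt N(\tilde\beta_{N,1}-\beta_0)=\Gam_{0,(1)}^{-1}\D_{N,(1)}+O_p(N^{-1/2})$ via \eqref{ti: QLA.ass2.2} and Lemma~\ref{ti:boundary of three and forth dev}. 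For Stages~2 and~3 the delicate point is to classify the higher-order derivatives of $\mbbh_N$ at $\tz$: $N^{-1}\p_\beta\p_v\mbbh_N$ and the mixed third-order derivative $N^{-1}\p_\beta\p_v^2\mbbh_N$ are centered, hence $O_p(N^{-1/2})$ (as with $\E[\Gam_{N,12}]=0$), whereas $N^{-1}\p_v^3\mbbh_N$ and $N^{-1}\p_\beta^3\mbbh_N$ have nonzero mean and $N^{-1}\p_\beta^2\p_v\mbbh_N$ is deterministic. Tracking these orders, the Stage~1 error enters $\tilde v_N$ only through the second-order terms $\Gam_{0,22}^{-1}\tfrac{1}{\sqrt N}\p_v\p_\beta\mbbh_N(\tz)[\Gam_{0,(1)}^{-1}\D_{N,(1)}]$ and $\Gam_{0,22}^{-1}\tfrac{1}{2N}\p_v\p_\beta^2\mbbh_N(\tz)[(\Gam_{0,(1)}^{-1}\D_{N,(1)})^{\otimes2}]$, the curvature $\Gam_{N,22}$ and the $\p_v^3\mbbh_N$ term supplying the rest of \eqref{ti:def_G_2N_nu}, while the $[\tilde\beta_{N,1}-\beta_0,\,\tilde v_N-v_0]$ cross term (carrying the centered $\p_\beta\p_v^2\mbbh_N$) is only $O_p(N^{-1})$. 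Repeating with $\beta$ and $v$ interchanged, expanding $\D_{N,\beta}(\tilde\beta_N,\tilde v_N)=0$ produces the four pieces of \eqref{ti:def_G_2N_beta}: the $\Gam_{N,11}$-curvature term, the cross term through the centered $\p_\beta\p_v\mbbh_N$ acting on $\Gam_{0,22}^{-1}\D_{N,v}$, the mixed term through the deterministic $\p_\beta^2\p_v\mbbh_N$, and the $\p_\beta^3\mbbh_N$ term. Collecting the $\beta$- and $v$-blocks gives \eqref{hm:stepwise.se}; the stated weak limit then follows because $G_{N,1}=\Gam_0^{-1}\D_N\cil N_p(0,\Gam_0^{-1}S_0\Gam_0^{-1})$ by \eqref{hm:D-clt} and block-diagonality, while $\tfrac{1}{\sqrt N}\tilde G_{N,2}=o_p(1)$ by \eqref{hm:s.oi-1}, \eqref{hm:qoi-2}, and Lemma~\ref{ti:boundary of three and forth dev}.

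For part~(2) I would invoke the polynomial-type large-deviation inequality \cite[Theorem~3]{Yos11} stage by stage. For Stage~1 all inputs are available --- quasi-score and third/fourth derivatives of $\mbbh_{N,(1)}$ bounded in every $L^q$ (from \eqref{hm:Y-bm}, as in Lemma~\ref{ti:boundary of three and forth dev}), $L^q$-convergence of its quasi-observed information to $\Gam_{0,(1)}$, and the quadratic separation $F_{1,2}(\beta)\ge\chi_1|\beta-\beta_0|^2$ --- so that $\sup_N\pr[|\sqrt N(\tilde\beta_{N,1}-\beta_0)|>r]\le C_L r^{-L}$ for every $L>0$, i.e.\ $\sqrt N(\tilde\beta_{N,1}-\beta_0)$ is bounded in $L^q$ for all $q$. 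For Stages~2 and~3 the quasi-likelihoods are random through the earlier estimators; writing $\mbbh_{N,(2)}(v)=\mbbh_N(\beta_0,v)+R_N(v)$, the Stage~1 tail bound together with Lemma~\ref{ti:boundary of three and forth dev} shows that $\sqrt N\,N^{-1}R_N(v)$ and its $v$-derivatives up to the order needed are bounded in every $L^q$ uniformly in $v$, an admissible asymptotically negligible perturbation in Yoshida's scheme, with the non-degeneracy inherited from Assumption~\ref{hm:A_4}; the analogous argument with $\tilde v_N$ in place of $\tilde\beta_{N,1}$ treats Stage~3. This yields $\sup_N\pr[|\tilde u_N|>r]\le C_L r^{-L}$ for all $L>0$.

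The main obstacle is the second-order bookkeeping in the middle step: one must keep careful track of which higher-order derivatives of $\mbbh_N$ are centered --- hence $O_p(N^{-1/2})$ --- and which are $O_p(1)$, in order to prove that the homoscedastic Stage~1 score $\D_{N,(1)}$ enters only inside $\tilde G_{N,2,v}$ and never at first order, and that exactly the terms displayed in \eqref{ti:def_G_2N_beta}--\eqref{ti:def_G_2N_nu} survive. A secondary difficulty is that $\mbbh_{N,(2)}$ and $\mbbh_{N,(3)}$ are plug-in quasi-likelihoods rather than i.i.d.\ sums, so applying \cite{Yos11} in the last step requires the uniform negligibility of the plug-in perturbations, for which the tail estimates from the earlier stages are indispensable.
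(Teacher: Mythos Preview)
Your proposal is correct and follows essentially the same approach as the paper: stage-by-stage Taylor expansion of the estimating equations with the same classification of which mixed partial derivatives of $\mbbh_N$ at $\tz$ are centered (so that, e.g., the $\p_\beta\p_v^2\mbbh_N$ cross term is absorbed into $O_p(N^{-1})$), and stage-by-stage application of \cite[Theorem~3]{Yos11} with the plug-in perturbations $\mbbh_{N,(k)}-\mbbh_N(\cdot,\cdot)$ controlled via the $L^q$-bounds on $\tilde u_{N,1}$ and $\tilde u_{N,2}$ inherited from the earlier stages. The only difference is the order of presentation --- the paper proves part~(2) before part~(1), an ordering it explicitly remarks is inessential.
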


From \eqref{ti:1st.se} and \eqref{hm:stepwise.se}, we see that the joint and stepwise GQMLEs are asymptotically first-order equivalent, that is, $|\hat{u}_N-\widetilde{u}_N| \cip 0$. 
The expressions \eqref{hm:def_G_2N} and Theorem \ref{ti:QLA.thm2} \eqref{ti:QLA.thm2-2} quantitatively show their difference in the second-order.
The proof of Theorem \ref{ti:QLA.thm2} is given in Section \ref{ti:sec_stepwise.proof}.

The Studentization \eqref{hm:jASN} remains the same.
Define $(\widetilde{S}_N,\widetilde{\Gam}_N)$ by $(\hat{S}_N,\hat{\Gam}_N)$ in Section \ref{hm:sec_an-tpe} except that all the plugged-in $\tes$ therein are replaced by $\widetilde{\theta}_N=(\widetilde{\beta}_N,\widetilde{v}_N)$.

\begin{cor}
\label{hm:cor_sASN}
\begin{equation}\label{hm:sASN}
\left(\widetilde{\Gam}_N^{-1} \widetilde{S}_N \widetilde{\Gam}_N^{-1}\right)^{-1/2}
\widetilde{u}_N \cil N_p(0,I_p).
\end{equation}
\end{cor}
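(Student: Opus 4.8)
\textbf{Proof plan for Corollary \ref{hm:cor_sASN}.}
The plan is to mimic the reasoning that precedes Corollary \ref{hm:cor_jASN}, the only genuinely new ingredient being the $\sqrt{N}$-consistency of the stepwise GQMLE, which is already in hand. First, Theorem \ref{ti:QLA.thm2}\eqref{ti:QLA.thm2-1} shows that $(\tilde{u}_N)_N$ is $L^q$-bounded for every $q>0$; in particular $\tilde{\theta}_N=(\tilde{\beta}_N,\tilde{v}_N)=\tz+O_p(N^{-1/2})$, so $\tilde{\theta}_N\cip\tz$. Then I would establish the consistency of the plug-in matrices, $(\tilde{S}_N,\tilde{\Gam}_N)\cip(S_0,\Gam_0)$. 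Since $\tilde{S}_N$ and $\tilde{\Gam}_N$ are obtained from $\hat{S}_N$ and $\hat{\Gam}_N$ merely by replacing $\tes$ with the (also $\sqrt{N}$-consistent) $\tilde{\theta}_N$, this is exactly the computation already used for the joint case.

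Concretely, for the blocks not involving $Y_i$ — namely $\tilde{S}_{N,11}$ and $\tilde{\Gam}_{N,11},\tilde{\Gam}_{N,22}$ — a first-order Taylor (mean-value) expansion in $\theta$ together with Assumption \ref{hm:A_1}(1),(3) gives $|\tilde{S}_{N,11}-S_{N,11}|\lesssim|\tilde{\theta}_N-\tz|\cdot O_p(1)=O_p(N^{-1/2})$ and likewise for the $\Gam$-blocks, after which $S_{N,11}\to S_{0,11}$ and the convergences in Assumption \ref{hm:A-Gam} finish the job. For the blocks that are quadratic in $Y_i$ — $\tilde{S}_{N,12}$ and $\tilde{S}_{N,22}$ — I would split the difference from the limit into (i) the same $O_p(N^{-1/2})$ parameter-perturbation term, controlled via Assumption \ref{hm:A_1} together with the moment bound \eqref{hm:Y-bm}, and (ii) the centering term $\frac1N\sumi(\,\cdot\,-\E[\,\cdot\,])$ evaluated at $\tz$, which vanishes in probability by the law of large numbers (indeed by \eqref{hm:lc-2}-type moment estimates) using \eqref{hm:Y-bm}; the remaining deterministic piece is precisely $S_{N,12}\to S_{0,12}$, $S_{N,22}\to S_{0,22}$ from Assumption \ref{hm:A-S.lim}. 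This yields $\tilde{S}_N\cip S_0$ and $\tilde{\Gam}_N\cip\Gam_0$.

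Finally, since $S_0$ and $\Gam_0$ are positive definite, $\Gam_0^{-1}S_0\Gam_0^{-1}$ is positive definite, so $(\tilde{\Gam}_N^{-1}\tilde{S}_N\tilde{\Gam}_N^{-1})^{-1/2}$ is well-defined with probability tending to one and, by the continuous mapping theorem, $(\tilde{\Gam}_N^{-1}\tilde{S}_N\tilde{\Gam}_N^{-1})^{-1/2}\cip(\Gam_0^{-1}S_0\Gam_0^{-1})^{-1/2}$. Combining this with the weak convergence $\tilde{u}_N\cil N_p(0,\Gam_0^{-1}S_0\Gam_0^{-1})$ from Theorem \ref{ti:QLA.thm2}\eqref{ti:QLA.thm2-2} and Slutsky's lemma gives $(\tilde{\Gam}_N^{-1}\tilde{S}_N\tilde{\Gam}_N^{-1})^{-1/2}\tilde{u}_N\cil N_p(0,I_p)$, which is \eqref{hm:sASN}.

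The main obstacle — and essentially the only place requiring work — is the plug-in stability in the second step: verifying that substituting the $\sqrt{N}$-consistent stepwise estimator for the true value in the expressions for $\tilde{S}_{N,12}$ and $\tilde{S}_{N,22}$, which are simultaneously nonlinear in $v$ (through $\Sig_i(v)^{-1}$ and its derivatives) and quadratic in $Y_i$, does not perturb the limit. This is routine given Assumption \ref{hm:A_1} and the finiteness of all moments of $Y_i$ in \eqref{hm:Y-bm}, combined with a uniform law of large numbers; everything else is an application of Slutsky's lemma and the continuous mapping theorem, exactly as in Corollary \ref{hm:cor_jASN}.
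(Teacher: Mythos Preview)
Your proposal is correct and follows exactly the route the paper takes: the paper simply remarks that since $\tilde{\theta}_N=(\tilde{\beta}_N,\tilde{v}_N)=\tz+O_p(N^{-1/2})$ by Theorem~\ref{ti:QLA.thm2}, one has $(\tilde{S}_N,\tilde{\Gam}_N)\cip(S_0,\Gam_0)$ just as for $(\hat{S}_N,\hat{\Gam}_N)$, and then invokes the asymptotic normality in \eqref{hm:stepwise.se} together with Slutsky. Your write-up merely spells out the plug-in stability argument (Taylor expansion, Assumption~\ref{hm:A_1}, moment bound \eqref{hm:Y-bm}, LLN) that the paper leaves implicit.
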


\begin{rem}
In \textbf{Stage 2} in the stepwise procedure, we adopted the Gaussian density, the random function $\mbbh_{N,(2)}(v)$. We could modify it as follows:
\begin{description}
\item[Stage 2'] $\hat{v}_N^{(0)}\in\argmin_{v}\widetilde{\mbbh}_{(2),N}(\bes^{(0)},v)$ where
\begin{equation}
\widetilde{\mbbh}_{(2),N}(\bes^{(0)},v) := \sumi \left\| \left(Y_i -\mu_i(\bes^{(0)})\right)^{\otimes 2} - \Sig_i(v) \right\|^2.
\nonumber
\end{equation}
\end{description}
This may be further divided into the two stages, which would be numerically more stable, while entailing an efficiency loss. 
Let us explain briefly. 
Recall the expression \eqref{hm:Sig_def}:
$\Sig_i(v) = Z_i \Psi(\gam) Z_i^\top + H_i(\lam, \sig^2) + \sig_{\epsilon}^2 I_{n_i}$. Since $\Sig_i(v)$ is partially linear in $c:=(\Psi(\gam),\sig_\ep^2)$. 
Regarding $\theta':=(\lam, \sig^2)$ as a known constant, we can explicitly write down the least-squares estimator of $c$ as a functional of data and $\theta'$, say $\widetilde{c}_N(\theta')$. Then, plugging-in it back to the original $\widetilde{\mbbh}_{(2),N}(\bes^{(0)},v)$, we obtain a contrast function for the parameter $\theta'$ only, say $\hat{\mbbh}_{(2),N}(\theta')$. Minimize $\hat{\mbbh}_{(2),N}$ to obtain $\tes'$, and then estimate the remaining parameter $c$ by $\hat{c}_N:=\widetilde{c}_N(\tes')$; of course, we further need the explicit form of $\gam\mapsto\Psi(\gam)$ to obtain a direct estimator $\hat{\gam}_N$ of $\gam$.
In this paper, we do not consider this point further.
\end{rem}

\subsection{Proof of Theorem \ref{ti:QLA.thm2}}
\label{ti:sec_stepwise.proof}

We will first prove the tail-probability estimate \eqref{ti:QLA.thm2-1} and then the second-order asymptotic expansion \eqref{ti:QLA.thm2-2}; we proceeded in reverse in Section \ref{hm:sec_joint}, but it was not essential, just because we wanted to make a natural flow by introducing several notations step by step. \rrev{The proof of Theorem \ref{ti:QLA.thm2} is fundamentally analogous to that of Theorem \ref{ti:QLA.thm1} for the joint GQMLE. However, the asymptotic expansion for the stepwise GQMLE additionally requires plugging the stochastic expansion of each stage's estimator into the expansion formula for the next stage.}

\subsubsection{
Tail-probability estimate
}

We will separately deduce the tail-probability estimate for each component of
\begin{equation}\nn
 (\widetilde{u}_{N,1},\widetilde{u}_{N,2},\widetilde{u}_{N,3}) 
 := \left(\sqrt{N} (\widetilde{\beta}_{N,1} - \beta_0)
 ,\sqrt{N} (\widetilde{v}_N - v_0)
 , \sqrt{N} (\widetilde{\beta}_N - \beta_0) \right),
\end{equation}
again by applying the criterion given in \cite[Theorem 3]{Yos11}.

First, for $\widetilde{u}_{N,1}$, we can follow the same line as in the proof of Theorem \ref{ti:QLA.thm1} \eqref{ti:QLA.thm1-2} by replacing the variance-covariance matrix by the identity matrices $I_{n_i}$ for $i\le N$. 
It follows that $\sup_N\sup_{r>0} r^{L} \pr\left[ |\widetilde{u}_{N,1}| > r\right] < \infty$, therefore, in particular $\sup_N\E[|\widetilde{u}_{N,1}|^K]<\infty$ for every $K>0$, which will be used subsequently.

Turning to $\widetilde{u}_{N,2}$, we apply the Taylor expansion
\begin{align}
\p_v^k\mbbh_{N,(2)}(v) &= \p_v^k\mbbh_N(\widetilde{\beta}_{N,1},v)\nn \\
&= \p_v^k\mbbh_{N}(\beta_0,v) + \left(\int_0^1 \frac{1}{\sqrt{N}}\p_{\beta}\p_v^k \mbbh_{N}(\beta_0 + s(\widetilde{\beta}_{N,1} - \beta_0), v)ds\right) [\widetilde{u}_{N,1}] 
\nn
\end{align}
for $k = 0, 1, 2, 3$. 
As in the proof of Theorem \ref{ti:QLA.thm1} \eqref{ti:QLA.thm1-2}, the random functions required for proving the tail probability evaluation in Stage 2 are given as follows:
\begin{align}
\D_{N,(2)} &:= \frac{1}{\sqrt{N}} \p_v\mbbh_{N,(2)}(v_0) 
= \frac{1}{\sqrt{N}} \p_v\mbbh_N(\widetilde{\beta}_{N,1}, v_0) 
\nn \\
&= \D_{N,v} + \left\{\left(\int_0^1 \frac{1}{N}\p_{\beta}\p_v \mbbh_{N}(\beta_0 + s(\widetilde{\beta}_{N,1} - \beta_0), v_0)ds\right) [\widetilde{u}_{N,1}]\right\},
\label{hm:202408082154-1} \\
\Gam_{N,(2)} &:= -\frac{1}{N}\p_v^2\mbbh_{N,(2)}(v_0) 
= -\frac{1}{N} \p_v^2 \mbbh_N(\widetilde{\beta}_{N,1}, v_0) 
\label{hm:202408082154-2} \\
&= \Gam_{N,22} - \frac{1}{\sqrt{N}}\left\{\left(\int_0^1 \frac{1}{N}\p_{\beta}\p_v^2 \mbbh_{N}(\beta_0 + s(\widetilde{\beta}_{N,1} - \beta_0), v_0)ds\right) [\widetilde{u}_{N,1}]\right\}, \nn \\
\mbby_{N,(2)}(v) &:= \frac{1}{N}(\mbbh_{N,(2)}(v) - \mbbh_{N,(2)}(v_0))
= \frac{1}{N}(\mbbh_N(\widetilde{\beta}_{N,1}, v) - \mbbh_N(\widetilde{\beta}_{N,1}, v_0)) \nn \\ 
&= 
\mbby_{N,v}(v) 
+ \frac{1}{\sqrt{N}}\bigg\{\left(\int_0^1 \frac{1}{N}\p_{\beta} \mbbh_{N}(\beta_0 + s(\widetilde{\beta}_{N,1} - \beta_0), v)ds\right) [\widetilde{u}_{N,1}] \nn \\
&\qquad\quad - \left(\int_0^1 \frac{1}{N}\p_{\beta} \mbbh_{N}(\beta_0 + s(\widetilde{\beta}_{N,1} - \beta_0), v_0)ds\right) [\widetilde{u}_{N,1}]\bigg\},
\label{hm:202408082154-3}
\end{align}
where $\mbby_{N,v}(v) := N^{-1} (\mbbh_N(\beta_0,v) - \mbbh_N(\beta_0,v_0))$, and finally,
\begin{align}
\frac{1}{N}\p_v^3\mbbh_{N,(2)}(v) 
&= \frac{1}{N} \p_v^3\mbbh_N(\beta_0, v) \nn\\
&{}
+ \frac{1}{\sqrt{N}}\left\{\left(\int_0^1 \frac{1}{N}\p_{\beta} \p_v^3 \mbbh_{N}(\beta_0 + s(\widetilde{\beta}_{N,1} - \beta_0), v)ds\right) [\widetilde{u}_{N,1}]\right\}.
\label{hm:202408082154-4}
\end{align}
As in Section \ref{hm:sec_joint}, under the present assumptions, we can show that the curly-bracket parts $\{\dots\}$ in the expressions \eqref{hm:202408082154-1} to \eqref{hm:202408082154-4} are all $L^K$-bounded for every $K>0$ uniformly in $v$, enabling us to proceed with the moment estimates as we have done for $\D_N$, $\Gam_N$, $\mbby_N(\theta)$, and $\p_\theta^3\mbbh_N(\theta)$ in Section \ref{hm:sec_joint}.
Thus, we proved Theorem \ref{ti:QLA.thm2} \eqref{ti:QLA.thm2-1} for $\widetilde{u}_{N,2}$, followed by $\sup_N\E[|\widetilde{u}_{N,2}|^K]<\infty$ for every $K>0$.

Finally, as for $\widetilde{u}_{N,3}$, we note that
\begin{align}
\p_{\beta}^k\mbbh_{N,(3)}(\beta) &= \p_{\beta}^k\mbbh_N(\beta,\widetilde{v}_N) \nn \\
&= \p_{\beta}^k\mbbh_N(\beta,v_0) + \left(\int_0^1 \frac{1}{\sqrt{N}}\p_v\p_{\beta}^k \mbbh_N(\beta, v_0+s(\widetilde{v}_N-v_0))ds\right) [\widetilde{u}_{N,2}] \nn
\end{align}
for $k = 0, 1, 2, 3$.
As before, we have
\begin{align}
    \D_{N,(3)} &:= \frac{1}{\sqrt{N}}\p_{\beta}\mbbh_{N,(3)}(\beta_0) 
= \frac{1}{\sqrt{N}}\p_{\beta}\mbbh_N(\beta_0,\widetilde{v}_N) \nn \\
&= \D_{N,\beta} + \left(\int_0^1 \frac{1}{N}\p_v\p_{\beta} \mbbh_N(\beta_0, v_0+s(\widetilde{v}_N-v_0))ds\right) [\widetilde{u}_{N,2}]
\nn
\end{align}
and $\sup_N \E[|\D_{N,(3)}-\D_{N,\beta}|^K]<\infty$ for every $K>0$.
In a similar fashion, 
\begin{equation}\nn
    \Gam_{N,(3)} := -\frac{1}{N}\p_{\beta}^2\mbbh_{N,(3)}(\beta_0) 
 = -\frac{1}{N}\p_{\beta}^2 \mbbh_N(\beta_0, \widetilde{v}_N) 
\end{equation}
satisfies that $\sup_N \E[|\Gam_{N,(3)}-\Gam_{N,11}|^K]<\infty$ for every $K>0$. 
Also, as in $\mbby_{N,(2)}(v)$ in the previous paragraph,
\begin{equation}
\mbby_{N,(3)}(\beta) := \frac{1}{N}(\mbbh_{N,(3)}(\beta) - \mbbh_N(\beta_0,\widetilde{v}_N)) = \frac{1}{N}(\mbbh_N(\beta, \widetilde{v}_N) - \mbbh_N(\beta_0,\widetilde{v}_N)) \nn
\end{equation}
satisfies that $\sup_N \E[\sup_\beta|\mbby_{N,(3)}(\beta) - \mbby_{N,\beta}(\beta)|^K]<\infty$ for every $K>0$, where
\begin{equation}\nn
    \mbby_{N,\beta}(\beta) := \frac1N \big(\mbbh_N(\beta,v_0) - \mbbh_N(\beta_0,v_0)\big).
\end{equation}
Moreover, we have 
\begin{equation}\nn
    \sup_N \E\left[\sup_\beta \left|\frac{1}{N}\p_{\beta}^3\mbbh_{N,(3)}(\beta) - \frac{1}{N}\p_{\beta}^3\mbbh_N(\beta, v_0) \right|^K\right] < \infty
\end{equation}
for every $K>0$.
With these moment estimates, we obtain Theorem \ref{ti:QLA.thm2} \eqref{ti:QLA.thm2-1} for $\widetilde{u}_{N,3}$.

%

\subsubsection{
Stochastic expansion and asymptotic normality
}

We will look at $\widetilde{u}_{N,2}$ and $\widetilde{u}_{N,3}$ separately. The fact $\widetilde{u}_{N}=O_p(1)$ derived in the previous subsection will be used repeatedly without mention.

As in Lemma \ref{ti:boundary of three and forth dev}, we can show that
$\sup_N \E[\sup_{\beta}|N^{-1} \p_{\beta}^4 \mbbh_{N,(3)}(\beta)|^K ] < \infty$ for all $K > 0$. Then, we expand the score functions in Stage 3 around $\beta_0$:
\begin{align}
\frac{1}{\sqrt{N}} \p_{\beta} \mbbh_{N,(3)}(\widetilde{\beta}_{N}) 
&= \frac{1}{\sqrt{N}} \p_{\beta} \mbbh_N(\beta_0, \widetilde{v}_N) + \frac{1}{N}\p_{\beta}^2 \mbbh_N(\beta_0, \widetilde{v}_N)[\widetilde{u}_{N,3}] \nn \\
&\qquad + \frac{1}{\sqrt{N}} \frac{1}{2N} \p_{\beta}^3 \mbbh_N(\beta_0, \widetilde{v}_N)
[\widetilde{u}_{N,3}^{\otimes 2}] + O_p(N^{-1}).
\nn
\end{align}
Since $\widetilde{\beta}_{N} \cip \beta_0$ with the limit lying in the interior of the parameter space, we have $N^\kappa \p_{\beta} \mbbh_{N,(3)}(\widetilde{\beta}_{N})=o_p(1)$ for any $\kappa\in\mbbr$, in particular, $N^{-1/2}\p_{\beta} \mbbh_{N,(3)}(\widetilde{\beta}_{N})=O_p(N^{-1})$. 
This gives
\begin{align}
-\frac{1}{N}\p_{\beta}^2 \mbbh_N(\beta_0, \widetilde{v}_N)[\widetilde{u}_{N,3}]
&= 
\frac{1}{\sqrt{N}} \p_{\beta} \mbbh_N(\beta_0, \widetilde{v}_N) \nn\\
&{}\qquad 
+ \frac{1}{\sqrt{N}} \frac{1}{2N} \p_{\beta}^3 \mbbh_N(\beta_0, \widetilde{v}_N)
[\widetilde{u}_{N,3}^{\otimes 2}] + O_p(N^{-1}).
\label{hm:stepwise-1'}
\end{align}

First, we note the first-order expansion.
Obviously,
\begin{align}
\frac{1}{\sqrt{N}}\p_{\beta}\mbbh_N(\beta_0,\widetilde{v}_N) &= \D_{N,\beta}+O_p(N^{-1/2}),
\nn\\
-\frac1N \p_{\beta}^2 \mbbh_N(\beta_0, \widetilde{v}_N)&=\Gam_{N,11}+O_p(N^{-1/2})=\Gam_{0,11}+O_p(N^{-1/2}).
\nn
\end{align}
By $\D_{N,\beta}=O_p(1)$, we conclude that
\begin{align}
\widetilde{u}_{N,3}
&=\left( \Gam_{0,11} + O_p(N^{-1/2}) \right)^{-1} \left(\D_{N,\beta} + O_p(N^{-1/2}) \right)
\nn\\
&= \Gam_{0,11}^{-1} \D_{N, \beta} + O_p(N^{-1/2}).
\label{hm:stepwise-b1}
\end{align}
Similarly,
\begin{align}
\widetilde{u}_{N,2} &= \Gam_{0,22}^{-1} \D_{N,v} + O_p(N^{-1/2}).
\label{hm:stepwise-v1}
\end{align}
It follows that $\widetilde{u}_N = G_{N,1} + O_p(N^{-1/2})$.

Turning to the second-order expansion, we note by \eqref{hm:stepwise-v1},
\begin{align}
\frac{1}{\sqrt{N}} \p_{\beta} \mbbh_N(\beta_0, \widetilde{v}_N) 
&= \D_{N,\beta} + \frac{1}{N}\p_v\p_{\beta}\mbbh_N(\tz)[\widetilde{u}_{N,2}] \nn \\
&\qquad + \frac{1}{2N} \frac{1}{\sqrt{N}} \p_v^2 \p_{\beta} \mbbh_N(\tz)[\widetilde{u}_{N,2}^{\otimes2}] + O_p(N^{-1})
\nn \\
&= \D_{N,\beta} + \frac{1}{\sqrt{N}} \frac{1}{\sqrt{N}}\p_v\p_{\beta}\mbbh_N(\tz)[\Gam_{0,22}^{-1} \D_{N, v}] \nn \\
&\qquad + \frac{1}{2N} \frac{1}{\sqrt{N}}\p_v^2\p_{\beta} \mbbh_N(\tz)[(\Gam_{0,22}^{-1}\D_{N,v})^{\otimes2}] + O_p(N^{-1})
\nn\\
&= \D_{N,\beta} + \frac{1}{\sqrt{N}} \frac{1}{\sqrt{N}}\p_v\p_{\beta}\mbbh_N(\tz)[\Gam_{0,22}^{-1} \D_{N, v}] + O_p(N^{-1}),
\nn
\end{align}
and similarly,
\begin{align}
& -\frac{1}{N}\p_{\beta}^2\mbbh_N(\beta_0, \widetilde{v}_N) 
\nn\\
&= \Gam_{N,11} - \frac{1}{\sqrt{N}} \frac{1}{N} \p_v\p_{\beta}^2 \mbbh_N(\tz)[\Gam_{0,22}^{-1}\D_{N,v}] + O_p(N^{-1})
\nn \\
&= \Gam_{0,11} - \frac{1}{\sqrt{N}} \left(
-\sqrt{N}(\Gam_{N,11}-\Gam_{0,11})
+\frac{1}{N} \p_v\p_{\beta}^2 \mbbh_N(\tz)[\Gam_{0,22}^{-1}\D_{N,v}]
\right)+ O_p(N^{-1}),
\nn
\end{align}
and
\begin{align}
\frac{1}{\sqrt{N}} \frac{1}{2N} \p_{\beta}^3\mbbh_N(\beta_0, \widetilde{v}_N) 
&= \frac{1}{\sqrt{N}} \frac{1}{2N} \p_{\beta}^3\mbbh_N(\tz) + O_p(N^{-1}).
\nn
\end{align}
Substituting these three expressions in \eqref{hm:stepwise-1'} and then arranging them, we obtain
\begin{align}
\widetilde{u}_{N,3}
&= \Gam_{0,11}^{-1}\D_{N,\beta} + \frac{1}{\sqrt{N}}\Gam_{0,11}^{-1}\bigg\{\sqrt{N}(\Gam_{0,11}-\Gam_{N,11})[\Gam_{0,11}^{-1}\D_{N,\beta}] \nn\\
&{}\qquad \qquad + \frac{1}{\sqrt{N}}\p_v\p_{\beta}\mbbh_{N}(\tz)[\Gam_{0,22}^{-1}\D_{N,v}] \bigg\} \nn \\
&\qquad + \frac{1}{2\sqrt{N}}\Gam_{0,11}^{-1}\bigg\{\frac{1}{N}\p_v^2\p_{\beta}\mbbh_{N}(\tz)[(\Gam_{0,22}^{-1}\D_{N,v})^{\otimes2}] 
\nn\\
&{}\qquad\qquad + \frac{2}{N} \p_v\p_{\beta}^2\mbbh_{N}(\tz)[\Gam_{0,11}^{-1}\D_{N,\beta}, \Gam_{0,22}^{-1}\D_{N,v}] \nn \\
&\qquad \qquad + \frac{1}{N}\p_{\beta}^3\mbbh_{N}(\tz)[(\Gam_{0,11}^{-1}\D_{N,\beta})^{\otimes2}]\bigg\} + O_p(N^{-1}) \nn \\
&= \Gam_{0,11}^{-1}\D_{N,\beta} + \frac{1}{\sqrt{N}}\widetilde{G}_{N,2,\beta} + O_p(N^{-1}).
\label{hm:stepwise-p2'}
\end{align}

As for the stochastic expansion of $\widetilde{v}_N$, we calculate the stochastic expansion of the estimator $\widetilde{\beta}_{N,1}$ in Stage 1 up to $O_p(N^{-1/2})$:
\begin{align}
\widetilde{u}_{N,1} = \Gam_{0,(1)}^{-1}\D_{N,(1)} + O_p(N^{-1/2}).
\nn
\end{align}
In the present case, we have
\begin{align}
\frac{1}{\sqrt{N}}\p_v\mbbh_N(\widetilde{\beta}_{N,1},v_0) 
&= \frac{1}{\sqrt{N}}\p_v\mbbh_N(\tz) + \frac{1}{N}\p_{\beta}\p_v\mbbh_N(\tz)[\Gam_{0,(1)}^{-1}\D_{N,(1)}] \nn \\
&\qquad + \frac{1}{2\sqrt{N}} \frac{1}{N}\p_{\beta}^2\p_v\mbbh_N(\tz)[(\Gam_{0,(1)}^{-1}\D_{N,(1)})^{\otimes2}] + O_p(N^{-1}), \nn
\end{align}
\begin{align}
\frac{1}{N}\p_v^2\mbbh_N(\widetilde{\beta}_{N,1},v_0) 
&= \frac{1}{N}\p_v^2\mbbh_N(\tz) + \frac{1}{\sqrt{N}} \frac{1}{N}\p_\beta\p_v^2\mbbh_N(\tz)[\Gam_{0,(1)}^{-1}\D_{N,(1)}] + O_p(N^{-1}), \nn
\end{align}
and
\begin{align}
\frac{1}{\sqrt{N}} \frac{1}{N}\p_v^3\mbbh_N(\widetilde{\beta}_{N,1},v_0) 
&= \frac{1}{\sqrt{N}} \frac{1}{N}\p_v^3\mbbh_N(\tz) + O_p(N^{-1}). \nn
\end{align}
Using these expressions, we can proceed as in the case of $\widetilde{\beta}_N$ to arrive at the stochastic expansion:
\begin{align}
\widetilde{u}_{N,2} 
&= \Gam_{0,22}^{-1}\D_{N,v} + \frac{1}{\sqrt{N}}\Gam_{0,22}^{-1}\bigg(\sqrt{N}(\Gam_{0,22}-\Gam_{N,22})[\Gam_{0,22}^{-1}\D_{N,v}] 
\nn\\
&{}\qquad\qquad + \frac{1}{\sqrt{N}}\p_{\beta}\p_v\mbbh_{N}(\beta_0,v_0)[\Gam_{0,(1)}^{-1}\D_{N,(1)}]\bigg) \nn \\
&\qquad + \frac{1}{2\sqrt{N}}\Gam_{0,22}^{-1}\bigg(\frac{1}{N}\p_{\beta}^2\p_v\mbbh_{N}(\beta_0,v_0)[(\Gam_{0,(1)}^{-1}\D_{N,(1)})^{\otimes2}] 
\nn\\
&{}\qquad\qquad + \frac{2}{N}\p_{\beta}\p_v^2\mbbh_{N}(\beta_0,v_0)[\Gam_{0,(1)}^{-1}\D_{N,(1)}, \Gam_{0,22}^{-1}\D_{N,v}] \nn \\
&\qquad + \frac{1}{N}\p_v^3\mbbh_{N}(\beta_0,v_0)[(\Gam_{0,22}^{-1}\D_{N,v})^{\otimes2}]\bigg) + O_p(N^{-1}) \nn \\
&= \Gam_{0,22}^{-1}\D_{N,v} + \frac{1}{\sqrt{N}}\widetilde{G}_{N,2,v} + O_p(N^{-1}).
\label{hm:stepwise-p3'}
\end{align} 
Combining \eqref{hm:stepwise-p2'} and \eqref{hm:stepwise-p3'} completes the proof of Theorem \ref{ti:QLA.thm2} \eqref{ti:QLA.thm2-2}.

\section{Remarks on the partially linear case}
\label{hm:sec_linear.case}

In this section, we take a closer look at some of the assumptions and statements in Theorem \ref{ti:QLA.thm2} in the original model, that is, \eqref{hm:target.model} and \eqref{hm:model_Yij} where
\begin{align}
\mu_i(\beta) &= X_i \beta, \nn\\
\Sig_i(v) &= Z_i \Psi(\gam) Z_i^\top + H_i(\lam, \sig^2) + \sig_{\epsilon}^2 I_{n_i}
\nn
\end{align}
with the expression \eqref{hm:H_form}.
We have
\begin{align}
    \mbbh_N(\theta)
    &= C'_N -\frac12 \sumi\left(
        \log|\Sig_i(v)| + \Sig_i(v)^{-1}\left[(Y_i - X_i\beta)^{\otimes 2}\right]
    \right),
\end{align}
where $C'_N$ is a constant independent of $\theta$.
Some entries of $\p_\theta^k\mbbh_N(\theta)$ can be simplified: for $l\ge 0$,
\begin{align}
    \p_v^l\p_\beta\mbbh_N(\theta)
    &= \sumi X_i^\top \p_v^l\left(\Sig_i(v)^{-1}\right)(Y_i-X_i\beta),
    \nn\\
    \p_v^l\p^2_\beta\mbbh_N(\theta)
    &= -\sumi X_i^\top \p_v^l\left(\Sig_i(v)^{-1}\right) X_i,
    \nn\\
    \p_v^l\p^3_\beta\mbbh_N(\theta) &\equiv 0.
    \nn
\end{align}
Still, the forms of the partial derivatives of $\mbbh_N$ for $v$ are somewhat messy. But the cross partial derivatives of $\Sig_i(v)$ with respect to the variables $\gam$, $(\lam,\sig^2)$, and $\sig_\ep^2$ vanishes, and $\p_{\sig^2}^k \Sig_i(v) =0$ and $\p_{\sig_\ep^2}^k \Sig_i(v) =0$ for $k\ge 2$.

Concerning the stepwise GQMLE, the ones in Stage 1 and 3 are explicitly given as
\begin{align}
\widetilde{\beta}_{N,1} &= \left(\sumi X_i^\top X_i\right)^{-1}\sumi X_i^\top Y_i,
\nn\\
\widetilde{\beta}_{N} &= \left(\sumi X_i^\top X_i\right)^{-1}\sumi X_i^\top \Sig_i(\widetilde{v}_N)^{-1}Y_i,
\nn
\end{align}
while $\widetilde{v}_N$ still requires numerical optimization.

Here are some further related details.
\begin{enumerate}
\item Assumption \ref{hm:A_1} holds if 
\begin{enumerate}
    \item $\Psi(\gam)$ is $\mcc^4$-class;
    \item $\inf \Theta_{\sig^2} + \inf\Theta_{\sig_\ep^2} >0$.
\end{enumerate}
It may happen that $\inf_\gam \lam_{\min}(\Psi(\gam))=0$.

\item Assumption \ref{hm:A_4} holds if
\begin{enumerate}
    \item $\ds{\liminf_N \inf_v \lam_{\min}\left(\frac1N \sumi X_i^\top \Sig_i(v)^{-1}X_i\right)>0}$;
    \item There exists an $i_0\ge 1$ for which $\Sig_i(v)\ne \Sig_i(v_0)$ whenever $v\ne v _0$;
    \item $\ds{\liminf_N \lam_{\min}\left(\frac1N \sumi 
    \tr\left(\Sig_i(v_0)^{-1}\p_v \Sig_i(v_0) \Sig_i(v_0)^{-1}\p_v \Sig_i(v_0)\right)
    \right)>0}$.
\end{enumerate}
Here, we used the fact that the inequality $\log|A|-\log|B|-\tr(B^{-1}A-I_a) \le 0$ holds for any $a\times a$ symmetric positive definite matrices $A$ and $B$ with the equality holding if and only if $A=B$.
The items (b) and (c) correspond to the two items mentioned just after Assumption \ref{hm:A_4}.
\end{enumerate}
Assumption \ref{hm:A_2} (moment conditions) is needed as it is. 
Also, as already mentioned in Remark \ref{hm:rem_LLN-with-rate}, the convergences in Assumptions \ref{hm:A-S.lim} and the convergences at the $N^{1/2}$-rate required in Assumptions \ref{hm:A_3} and \ref{hm:A-Gam} and also in \eqref{ti: QLA.ass2.2} and \eqref{ti:QLA.ass2.6} are not straightforward to verify in the present unbalanced sampling framework.

\rrev{
\section{Data analysis}
\label{sec:simulations}
}
\subsection{Numerical experiments}
\label{subsec:simulations}
%

We performed numerical experiments to evaluate the asymptotic normality of the GQMLE under the non-Gaussian distribution of the longitudinal data $(Y_i)_{i \geq 1}$ and to evaluate the differences between the joint and stepwise GQMLEs. We assumed a scenario where the random-effect distribution does not follow the Gaussian distribution. For the evaluation of differences between the joint and stepwise GQMLEs, we confirmed the bias and computational load for the estimates. Our numerical experiment was conducted with the R software. 

For the numerical experiment, we generated the longitudinal data $(Y_i(t_{ij}))$ for $i=1, \dots, N$ and $j=1, \dots, n_i$ from the model: 
\begin{align}
Y_i(t_{ij}) &= \beta_1 + \beta_2\times t_{ij} + \beta_3\times g_i + b_{i} + W_{ij} + \ep_i(t_{ij}) \nn
\end{align}
with the explanatory variables $X_i(t_{ij})=(1, t_{ij}, g_i)$ and $Z_i(t_{ij})=1$. Here, $g_i$ denotes a dummy variable representing two hypothetical treatment groups (i.e., treatment or control group), which were generated from a binomial distribution ($p=0.5$). \rev{We used the Wiener process as L\'{e}vy noise in all our numerical experiments.} The random system-noise variable $(W_i(t_{ij}))_{j=1, \dots, n_i}$ followed a multivariate Gaussian distribution with the mean zero vector and the covariance matrix $H_i(1.30, 0.40^2)$. The true fixed-effect parameter was given as $(\beta_1,\beta_2,\beta_3)=(2.0, -1.0, 0.5)$. The number of time points $n_i$ was obtained from the integer part of Uniform(15,20)-random number, and the measurement time points $t_{i1}, \dots, t_{in_i}$ were randomly selected from $\{1,2, \dots, 20\}$ for each individual. The measurement error vector $(\ep_i(t_{ij}))_{j=1, \dots, n_i}$ followed a multivariate Gaussian distribution with the zero-mean vector and the diagonal covariance matrix $0.5^2 \times I_{n_i}$. The random effect followed a variance-gamma (VG) distribution whose density is given by
\begin{align}
x &\mapsto 
\frac{2a_1^{a_1}(2a_1+a_4^2 a_3^{-1})^{\frac{1}{2}-a_1}}{\sqrt{2\pi a_3}\,\Gamma(a_1)} 
\nn\\
&{}\qquad 
\times \frac{K_{a_1-\frac{1}{2}}\left(\sqrt{Q(x;a_2,a_3)(2a_1+a_4^2 a_3^{-1})}\right) e^{(x-a_2)a_3^{-1} a_4}}{\left(\sqrt{Q(x;a_2,a_3)(2a_1+a_4^2a_3^{-1})}\right)^{\frac{1}{2}-a_1}},
\nn
\end{align}
where $K_{a}(\cdot)$ is the modified Bessel function of the third kind, $Q(x;a_2,a_3)=(x-a_2)^2/a_3$. 
This probability density function is asymmetric and has a heavier tail than the Gaussian distribution. We generated the VG-random numbers by using 
the R-package \texttt{ghyp}. 
The true parameters were given as $(a_1, a_2, a_3, a_4)=(3,-3,0.1,3)$, then the mean and variance of the random effect were $0$ and $\sig_b^2:=3.01$, respectively. 
Thus, the values of the true parameters are summarized as follows:
$\theta = \left(\beta_1,\beta_2,\beta_3, \sig_b^2, \lam,\sig^2,\sig_\ep^2 \right)
= (2.0, -1.0, 0.5, 3.01, 1.3, 0.4^2, 0.5^2)$. 
\rev{The number of parameters to be estimated is seven in total: three fixed-effect parameters, one random-effect variance parameter, two system-noise parameters, and one measurement error variance parameter. Based on the true parameters, we generated 1000 Monte Carlo data sets for sample sizes ($N$) of 100, 300, 500, and 1000, respectively.}
We used the built-in \textbf{optim} function to numerically optimize the joint GQLF and stepwise GQLF (Stage 2). The Nelder-Mead method was applied as the optimization algorithm. \rev{The joint GQMLE was obtained by optimizing all parameters simultaneously, while the stepwise GQMLE was obtained stepwise by the method of Stages 1 to 3 in Subsection \ref{ti:subsec_stepwise_construction}.}

We now discuss the results of the numerical experiments. For the computation time of the joint and stepwise estimates, Table \ref{time.tab} shows summary statistics, and Figure \ref{time.fig} shows the box plots. 
The computation time for obtaining the stepwise GQMLE is much shorter than that for the joint GQMLE \rrev{for all sample size settings}. \rev{In this numerical experiment setup, as described in Section \ref{hm:sec_linear.case}, the stepwise GQMLE (estimates of fixed effect parameters) for Stage 1 and 3 are explicitly given, which is not the case for the joint GQMLE. Therefore, in the stepwise GQMLE, only the variance-covariance parameters of the longitudinal response variable are numerically optimized, resulting in a much shorter time than the joint GQMLE.} 
Table \ref{bias.tab} shows the means and standard deviations of the biases, that is, the differences between each parameter and the true parameters for $1000$ iterations. 
For the parameters regarding the fixed-effect, the variance of the random-effect, and the variance of the measurement error, the results are similar for both joint and stepwise GQMLEs, \rev{even with small sample sizes}. In contrast, for the two parameters in the system-noise, $\lam$ and $\sig$, the biases of the stepwise GQMLE are greater than those of the joint GQMLE, \rev{especially when the sample size is small}. As the sample size increases, the biases of the stepwise GQMLE become smaller, so a larger sample size seems necessary to obtain estimates that are less different from the true parameters. Figures \ref{histogram.fig} and \ref{qqplot.fig} show histograms and normal quantile-quantile plots (Q-Q plots) for the joint and stepwise GQMLEs. From these figures, the standard normal approximation seems to hold for both estimators well.

\begin{center}
\begin{table}[h]
\centering
\caption{Summary statistics of the computation time (seconds) for calculating the joint and stepwise GQMLEs for $1000$ iterations; SD means Standard deviation.}
\label{time.tab}
\begin{footnotesize}
\rrev{
\begin{tabular}{llcc}
\toprule
\multicolumn{1}{l}{Sample size ($N$)} & \multicolumn{1}{c}{Statistics} & \multicolumn{1}{c}{Joint} & \multicolumn{1}{c}{Stepwise}
\\
\hline 
$N = 100$ & Mean (SD) & 120.18 (28.13) & 43.47 (9.76) \\
        & Min, Max & 52.23, 210.36 & 19.64, 90.47 \\
        \hdashline 
$N = 300$ & Mean (SD) & 404.70 (86.57) & 123.13 (26.55) \\
        & Min, Max & 209.67, 718.09 & 62.64, 243.60 \\
        \hdashline 
$N = 500$ & Mean (SD) & 869.27 (3245.50) & 201.34 (60.80) \\
        & Min, Max & 371.51, 86005.75 & 107.75, 1607.64 \\
        \hdashline 
$N = 1000$ & Mean (SD) & 763.71 (1027.20) & 204.99 (243.49) \\
        & Min, Max & 395.19, 32547.95 & 105.13, 7679.54 \\
\hline
\end{tabular}
}
\end{footnotesize}
\end{table}
\end{center}

\begin{figure}[tbh]
\centering
\includegraphics[width=10cm, height=7cm]{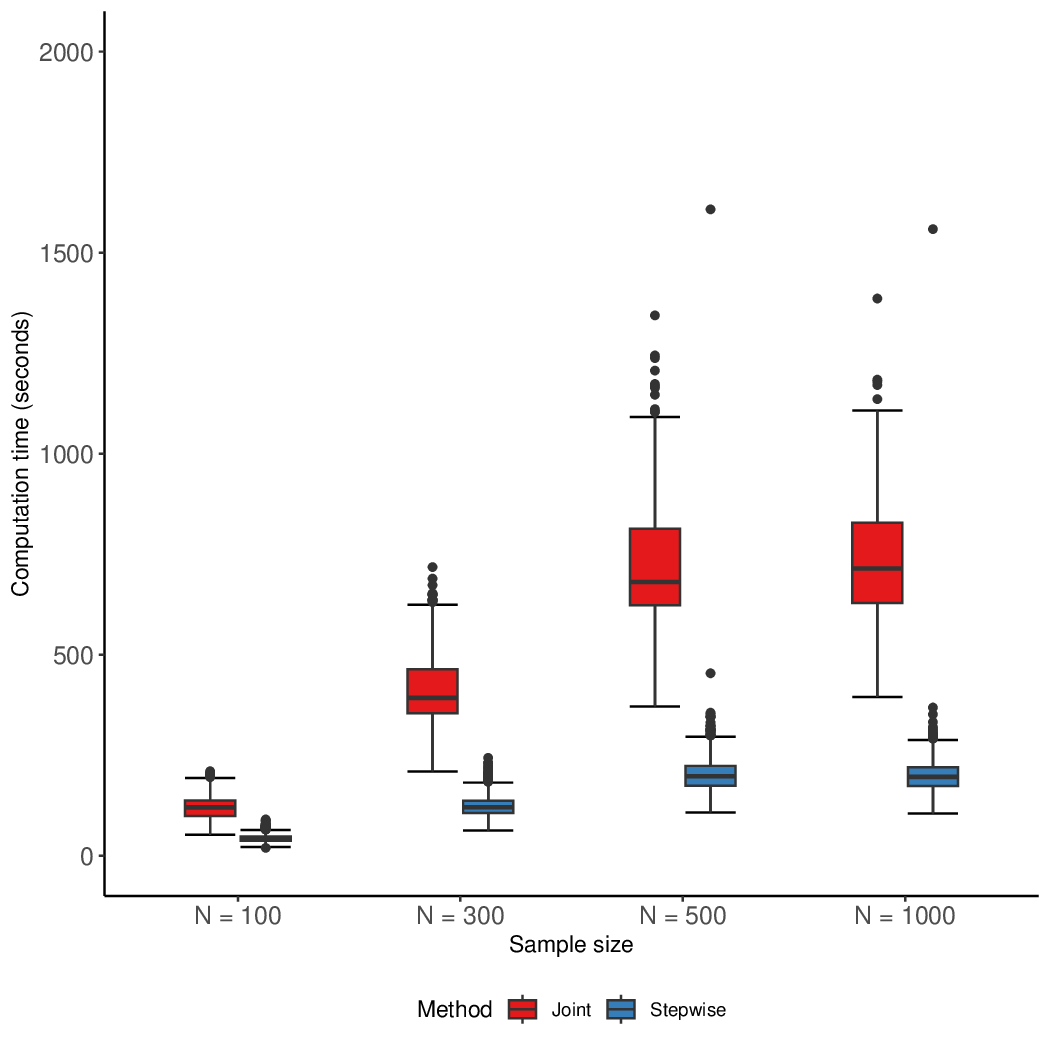}
\rrev{\caption{Box plots of the computation times (seconds) for calculating the joint GQMLE and the stepwise GQMLE for $1000$ iterations.}
\label{time.fig}
}
\end{figure}

\medskip

\begin{center}
\begin{table}[h]
\centering
\caption{
The mean bias and the standard deviation (SD) of the joint and stepwise GQMLEs for $1000$ iterations.
}
\label{bias.tab}
\begin{footnotesize}
\begin{tabular}{rrrrrr}
\toprule
\multicolumn{1}{c}{} & \multicolumn{4}{c}{Joint GQMLE} \\
\cmidrule(lr){2-5}
\multicolumn{1}{c}{Parameter} & \multicolumn{1}{c}{$N = 100$} & \multicolumn{1}{c}{$N = 300$} & \multicolumn{1}{c}{$N = 500$} & \multicolumn{1}{c}{$N = 1000$}
\\
\hline 
\multicolumn{1}{c}{$\beta_1$} & 0.003 (0.259) & 0.001 (0.152) & -0.002 (0.118) & -0.001 (0.086) \\
\multicolumn{1}{c}{$\beta_2$} & 0.000 (0.008) & 0.000 (0.004) & 0.000 (0.003) & 0.000 (0.002) \\
\multicolumn{1}{c}{$\beta_3$} & 0.002 (0.372) & 0.000 (0.215) & -0.001 (0.164) & 0.001 (0.117) \\
\multicolumn{1}{c}{$\gamma$} & -0.045 (0.661) & -0.011 (0.371) & -0.002 (0.283) & -0.010 (0.206) \\
\multicolumn{1}{c}{$\lambda$} & -0.083 (0.756) & 0.019 (1.144) & -0.001 (0.725) & 0.043 (0.733) \\
\multicolumn{1}{c}{$\sigma$} & -0.023 (0.226) & 0.006 (0.340) & 0.000 (0.215) & 0.013 (0.217) \\
\multicolumn{1}{c}{$\sigma_{\epsilon}$} & 0.004 (0.015) & 0.002 (0.009) & 0.002 (0.008) & 0.001 (0.006) \\
\hline
\end{tabular}

\bigskip

\begin{tabular}{rrrrrr}
\toprule
\multicolumn{1}{c}{} & \multicolumn{4}{c}{Stepwise GQMLE} \\
\cmidrule(lr){2-5}
\multicolumn{1}{c}{Parameter} & \multicolumn{1}{c}{$N = 100$} & \multicolumn{1}{c}{$N = 300$} & \multicolumn{1}{c}{$N = 500$} & \multicolumn{1}{c}{$N = 1000$}
\\
\hline 
\multicolumn{1}{c}{$\beta_1$} & 0.004 (0.255) & 0.001 (0.149) & 0.000 (0.116) & -0.001 (0.083) \\
\multicolumn{1}{c}{$\beta_2$} & 0.000 (0.008) & 0.000 (0.004) & 0.000 (0.003) & 0.000 (0.002) \\
\multicolumn{1}{c}{$\beta_3$} & -0.001 (0.367) & -0.004 (0.212) & -0.003 (0.160) & -0.001 (0.115) \\
\multicolumn{1}{c}{$\gamma$} & -0.061 (0.650) & -0.019 (0.367) & -0.008 (0.283) & -0.015 (0.204) \\
\multicolumn{1}{c}{$\lambda$} & 2.278 (14.152) & 1.433 (15.511) & 1.110 (10.065) & 0.250 (1.074) \\
\multicolumn{1}{c}{$\sigma$} & 0.687 (4.420) & 0.427 (4.600) & 0.331 (2.994) & 0.074 (0.320) \\
\multicolumn{1}{c}{$\sigma_{\epsilon}$} & -0.004 (0.017) & -0.002 (0.011) & -0.002 (0.009) & -0.001 (0.006) \\
\hline
\end{tabular}
\end{footnotesize}
\end{table}
\end{center}

\begin{figure}[tbh]
\centering
\includegraphics[width=12cm]{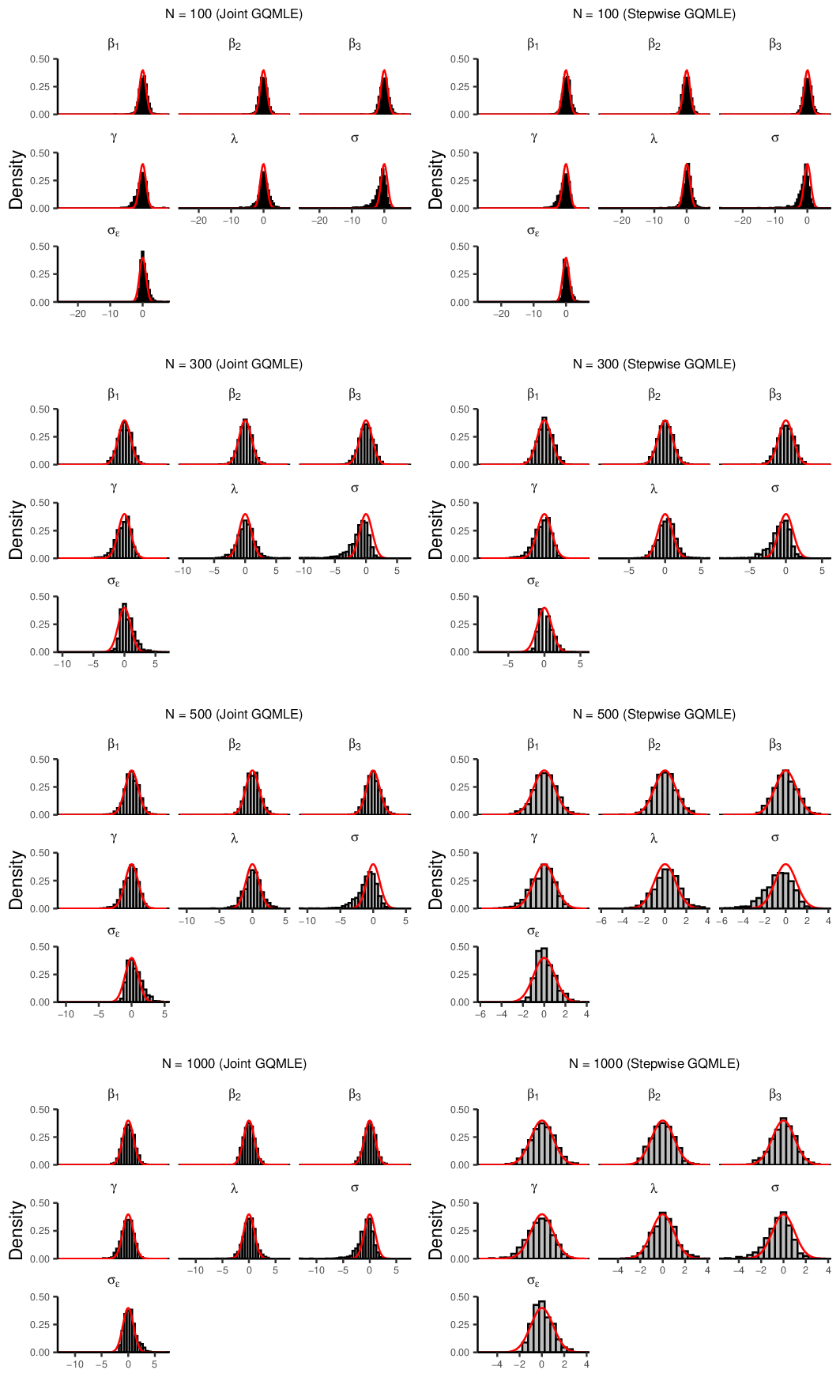}
\rrev{\caption{Histograms of the studentized joint and stepwise GQMLEs and probability density function of standard Gaussian distribution (red curve)}
\label{histogram.fig}
}
\end{figure}

\begin{figure}[tbh]
\centering
\includegraphics[width=12cm]{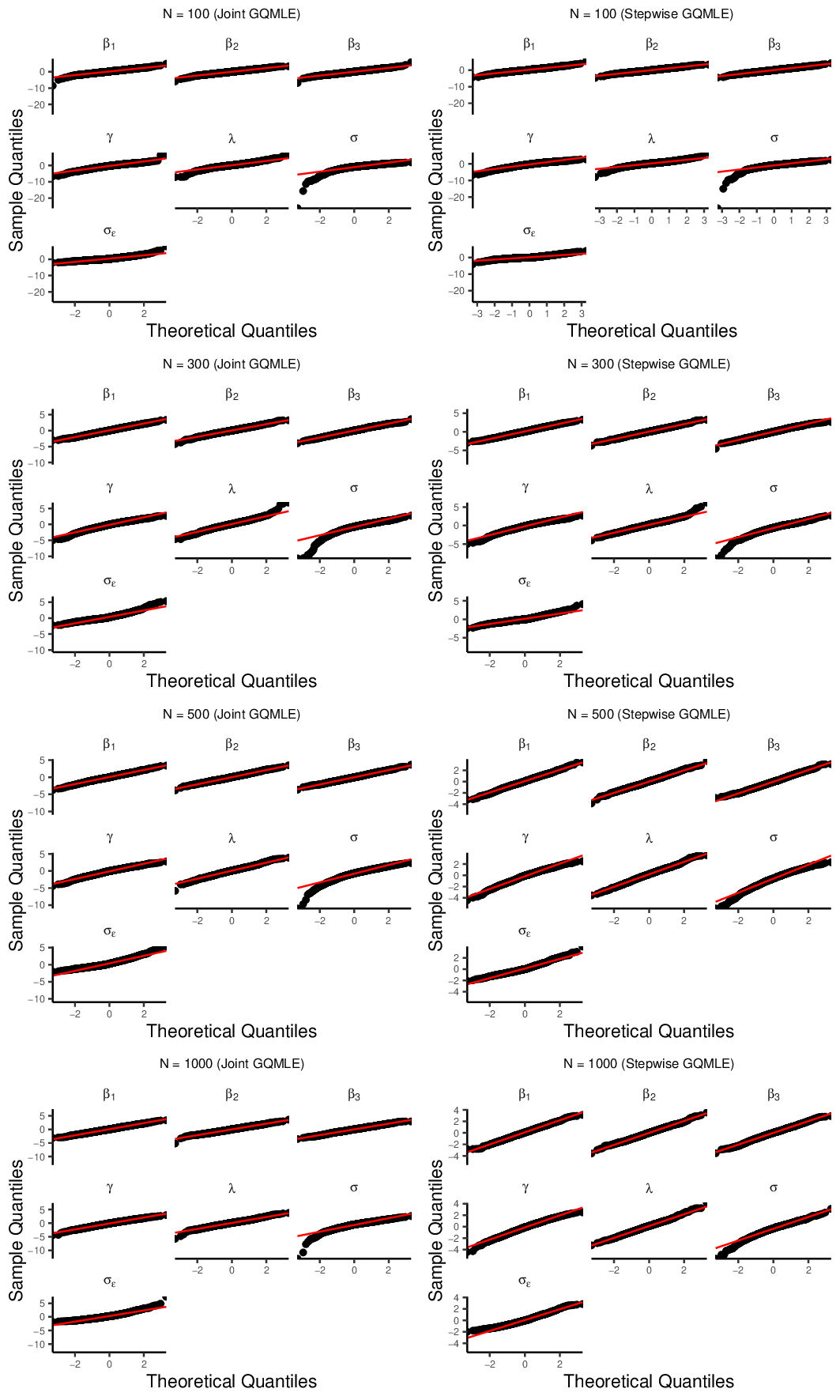}
\rrev{\caption{Normal Q-Q plots of the studentized joint and stepwise GQMLEs}
\label{qqplot.fig}
}
\end{figure}

\newpage

\rrev{
\subsection{Application to the primary biliary cholangitis data}
\label{subsec:RWD}
We applied the LME model with the intOU process \eqref{hm:target.model} to a dataset of primary biliary cholangitis (PBC) from the R package \textbf{JM} \cite{JMPackage}. The PBC data was obtained from the Mayo Clinic trial between 1974 and 1984 \cite{PBC1994}. Of the 312 patients with PBC, 158 were randomly assigned to the D-penicillamine group and 154 to the placebo group. In addition to baseline demographic characteristics, biomarkers including serum bilirubin and albumin were repeatedly measured during the follow-up period. We evaluated the impact of the IOU process on parameter estimates and model performance in the LME model. In this model, the log-ratio to baseline in serum bilirubin was defined as the response variable, with fixed effects for baseline bilirubin, treatment, time, and the treatment-by-time interaction, and random intercepts and slopes for time. A total of 285 patients who had serum bilirubin data available after randomization were included in the analysis. Figure \ref{SerumBili.fig} shows the observed trajectories by individual. The number of post-randomization serum bilirubin measurements averaged 5.7, ranging from 1 to 15. The dataset was unbalanced, as both the number of measurements and the measurement intervals varied by individual. 
}

\rrev{
We compared the LME models with and without the intOU process. Furthermore, both the joint and stepwise Gaussian quasi-likelihood approaches were applied to the LME model with the intOU process. Table \ref{ti:results_RQD} presents the parameter estimates, standard errors, and the BIC (see \eqref{ti:BIC}) for each model. Regardless of the inclusion of the intOU process, the fixed-effects parameters yield similar estimates. This result is expected, because incorporating the intOU process only alters the variance structure. The model including the intOU process provides a better fit in terms of BIC. Furthermore, the joint and stepwise GQMLEs exhibit no substantial difference in the model with the intOU process.
}

\begin{figure}[tbh]
\centering
\includegraphics[width=12cm]{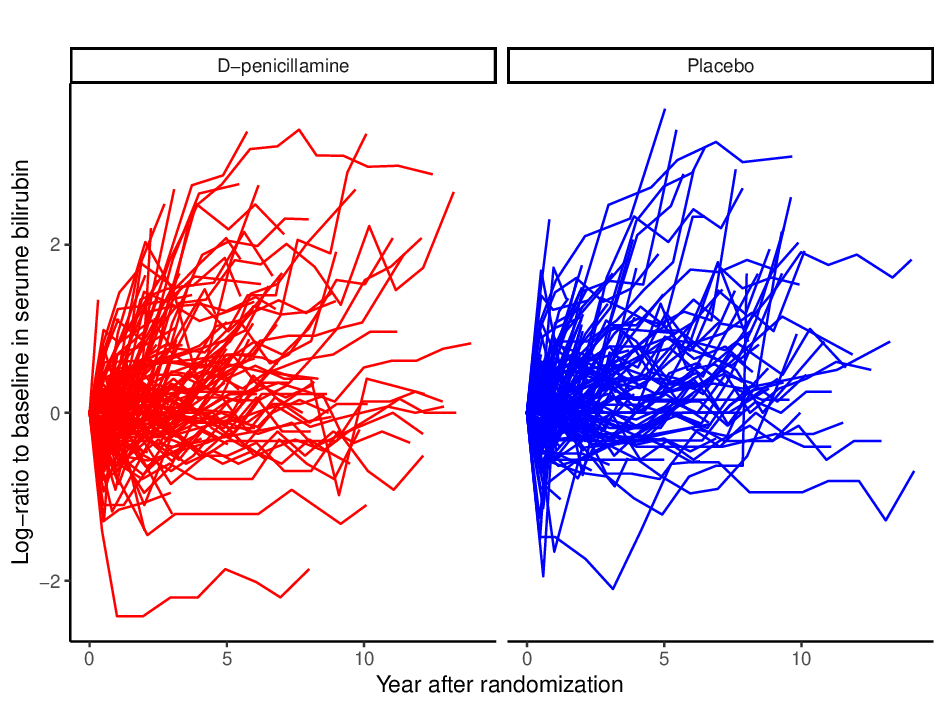}
\rrev{\caption{Spaghetti plot of log-ratio to baseline in serum bilirubin}
\label{SerumBili.fig}
}
\end{figure}

\begin{center}
\begin{table}[h]
\centering
\caption{
Parameter estimates and BIC.
}
\label{ti:results_RQD}
\begin{footnotesize}
\rrev{
\begin{tabular}{rrrr}
\toprule
\multicolumn{1}{l}{} & \multicolumn{1}{c}{LME model} & \multicolumn{2}{c}{LME model}\\
\multicolumn{1}{l}{} & \multicolumn{1}{c}{without the intOU process} & \multicolumn{2}{c}{with the intOU process}\\
\cmidrule(lr){3-4}
\multicolumn{1}{l}{} & \multicolumn{1}{c}{} & \multicolumn{1}{c}{Joint} & \multicolumn{1}{c}{Stepwise} 
\\
\hline 
\multicolumn{1}{l}{Estimates ${}^{1}$ (S.E. ${}^{2}$)} & \multicolumn{3}{c}{} \\
\multicolumn{1}{r}{$\beta_1$: Intercept} & \multicolumn{1}{c}{-0.01869 (0.04710)} & \multicolumn{1}{c}{-0.01738 (0.04644)} & \multicolumn{1}{c}{-0.02550 (0.04937)} \\
\multicolumn{1}{r}{$\beta_2$: Baseline value} & \multicolumn{1}{c}{-0.01470 (0.00746)} & \multicolumn{1}{c}{-0.01427 (0.00689)} & \multicolumn{1}{c}{-0.01202 (0.00693)} \\
\multicolumn{1}{r}{$\beta_3$: Year} & \multicolumn{1}{c}{0.17397 (0.01869)} & \multicolumn{1}{c}{0.17066 (0.01951)} & \multicolumn{1}{c}{0.17293 (0.02116)} \\
\multicolumn{1}{r}{$\beta_4$: Group} & \multicolumn{1}{c}{-0.09523 (0.05910)} & \multicolumn{1}{c}{-0.09174 (0.05832)} & \multicolumn{1}{c}{-0.10544 (0.06260)} \\
\multicolumn{1}{r}{$\beta_5$: Group $\times$ Year} & \multicolumn{1}{c}{0.00727 (0.02604)} & \multicolumn{1}{c}{0.00039 (0.02728)} & \multicolumn{1}{c}{-0.00771 (0.02959)} \\
\multicolumn{1}{r}{$\gam_1$} & \multicolumn{1}{c}{0.41188 (0.02972)} & \multicolumn{1}{c}{0.39435 (0.03327)} & \multicolumn{1}{c}{0.43247 (0.03099)} \\
\multicolumn{1}{r}{$\gam_2$} & \multicolumn{1}{c}{0.00452 (0.00218)} & \multicolumn{1}{c}{-0.00876 (0.00609)} & \multicolumn{1}{c}{-0.03189 (0.00686)} \\
\multicolumn{1}{r}{$\gam_3$} & \multicolumn{1}{c}{0.17040 (0.00103)} & \multicolumn{1}{c}{0.04138 (0.14952)} & \multicolumn{1}{c}{0.00996 (0.67876)} \\
\multicolumn{1}{r}{$\lam$} & \multicolumn{1}{c}{--- (---)} & \multicolumn{1}{c}{0.33247 (0.15054)} & \multicolumn{1}{c}{0.30953 (0.11820)} \\
\multicolumn{1}{r}{$\sig$} & \multicolumn{1}{c}{--- (---)} & \multicolumn{1}{c}{0.19992 (0.03611)} & \multicolumn{1}{c}{0.21532 (0.03060)} \\
\multicolumn{1}{r}{$\sig_\ep$} & \multicolumn{1}{c}{0.33472 (0.01356)} & \multicolumn{1}{c}{0.26717 (0.01307)} & \multicolumn{1}{c}{0.26564 (0.01297)} \\
\hdashline
\multicolumn{1}{l}{BIC} & \multicolumn{1}{c}{2171.782} & \multicolumn{1}{c}{1984.869} & \multicolumn{1}{c}{1992.382} \\
\hline
\end{tabular}
  \begin{minipage}{\linewidth}
    \vspace{0.2cm}
    \footnotemark[1] Parameters: $\beta_1, \dots, \beta_5$ denote the fixed-effect parameters; 
    $\gam_1$ and $\gam_3$ denote the variance parameters for the random intercept and slope, and $\gam_2$ denotes their covariance; $\lam$ and $\sig$ denote the autoregression and scale parameters of the intOU process; finally, $\sig_\ep$ denotes the variance parameter for measurement error.\\
    \footnotemark[2] S.E. = Standard errors obtained by the consistent estimators of asymptotic variances: \eqref{hm:jASN} for the joint GQMLE, \eqref{hm:sASN} for the stepwise GQMLE.
  \end{minipage}
} 
\end{footnotesize}
\end{table}
\end{center}

\section{Concluding remarks}
\label{sec:discussion}

In this paper, we considered the asymptotic behavior of the joint and stepwise GQMLE for the class of possibly non-Gaussian LME models. We proved that both estimators have asymptotic normality with the same asymptotic covariance matrix and the tail-probability estimate. Moreover, we showed the quantitative difference in the second-order terms of the joint and stepwise GQMLEs: the equation \eqref{hm:def_G_2N} in Theorem \ref{ti:QLA.thm1} and the equations \eqref{ti:def_G_2N_beta} and \eqref{ti:def_G_2N_nu} in Theorem \ref{ti:QLA.thm2}. This should be informative in studying the cAIC, which involves the second-order stochastic expansion of the estimator. 
We also note that, as we mentioned in \cite[Remark 2.5]{ImaMasTaj24}, instead of the intOU process we could consider the fractional Brownian motion to model the system noise for each individual.

The numerical experiments demonstrated that both the joint and stepwise GQMLEs exhibit competitive performance consistent with asymptotic normality. In particular, the computation time for the stepwise GQMLE is significantly shorter than that for the joint GQMLE. On the other hand, the variation of the stepwise GQMLE tends to be larger than that of the joint GQMLE, particularly in small-sample settings. 
\rrev{
Our analysis of the PBC data suggests that the model incorporating the intOU process provides a better fit than the model without it. By flexibly capturing complex variance structures, the intOU process is expected to enhance performance for unbalanced datasets, such as the PBC data, where the number of measurements and measurement intervals vary across individuals.
}

\bigskip

\noindent
\textbf{Acknowledgements.} 
This work was partially supported by JST CREST Grant Number JPMJCR2115 and JSPS KAKENHI Grant Numbers 23K22410 (22H01139), Japan.

\bigskip

\bibliographystyle{abbrv} 
\bibliography{imamura_bibs}


\end{document}